\newskip\@bigflushglue \@bigflushglue = -100pt plus 1fil
\def\bigcentering{\let\\\@centercr\rightskip\@bigflushglue%
\leftskip\@bigflushglue
\parindent\z@\parfillskip\z@skip}
\theoremstyle{plain}
\newtheorem{theorem}{Theorem}[section] % reset theorem numbering for each chapter
\newtheorem{proposition}[theorem]{Proposition}
\newtheorem{lemma}[theorem]{Lemma}
\newtheorem{corollary}[theorem]{Corollary}
\theoremstyle{definition}
\newtheorem{remark}[theorem]{Remark}
\newtheorem{definition}[theorem]{Definition} % definition numbers are dependent on theorem numbers
\newtheorem{example}[theorem]{Example} % same for example numbers
\newtheorem{question}[theorem]{Question}
\newtheorem{conjecture}[theorem]{Conjecture}
\theoremstyle{plain}
\newtheorem{thmx}{Theorem}
\newcommand{\WP}{\textnormal{WP}}
\newcommand{\Geo}{\textnormal{Geo}}
\newcommand{\Aut}{\textnormal{Aut}}
\newcommand{\SAW}{\textnormal{SAW}}
\newcommand{\N}{\mathbb{N}}
\newcommand{\Z}{\mathbb{Z}}
\newcommand{\C}{\mathbb{C}}
\newcommand{\R}{\mathbb{R}}
\newcommand{\X}{\mathcal{X}}
\newcommand{\Ou}{\mathcal{O}}
\newcommand{\M}{\mathcal{M}}
\newcommand{\lang}{\mathcal{L}}
\newcommand{\D}{\mathcal{D}}
\newcommand{\F}{\mathbb{F}}
\newcommand{\Fo}{\mathcal{F}}
\newcommand{\ter}{\mathfrak{t}}
\newcommand{\init}{\mathfrak{i}}
\newcommand{\llangle}{\langle\langle}
\newcommand{\rrangle}{\rangle\rangle}
\newcommand{\factor}{\sqsubseteq}
\newcommand{\Conv}{\mathop{\scalebox{2}{\raisebox{-0.2ex}{$\ast$}}}}
\renewcommand{\tt}[1]{\mathtt{#1}}
\newcommand{\define}[1]{\textbf{#1}}
\newcommand{\argmin}{\textnormal{argmin}}
\definecolor{rouge}{RGB}{255,77,77}
\definecolor{vert}{RGB}{0,178,102}
\definecolor{jaune}{RGB}{255,255,0}
\definecolor{violet}{RGB}{208,32,144}
\definecolor{orange}{RGB}{255,140,0}
\definecolor{bleu}{RGB}{0,0,205}
\newcommand{\wang}[6]{
\draw [black,fill=#3] (#1,#2+1)  -- (#1+0.5,#2+0.5) -- (#1+1,#2+1) -- cycle;
\draw [black,fill=#4] (#1+1,#2+1)  -- (#1+0.5,#2+0.5) -- (#1+1,#2) -- cycle;
\draw [black,fill=#5] (#1,#2)  -- (#1+0.5,#2+0.5) -- (#1+1,#2) -- cycle;
\draw [black,fill=#6] (#1,#2)  -- (#1+0.5,#2+0.5) -- (#1,#2+1) -- cycle;
}
\newcommand{\marker}{\vbox to 12pt{\hbox to 15pt{
\hspace{-2pt}\includegraphics[width=15pt]{figures/youRhere.png}
}}}
\newcommand\hexagone[8]{ 
\begin{scope}[rotate around={#3:(#7,#8)},shift={(#1,#2)},scale=#6]
        \draw[color=#4] (0:\R) -- (60:\R);
        \draw[color=#5] (60:\R) -- (120:\R);
        \draw[color=#4] (120:\R) -- (180:\R);
        \draw[color=#5] (180:\R) -- (240:\R);
        \draw[color=#4] (240:\R) -- (300:\R);
        \draw[color=#5] (300:\R) -- (360:\R);
\end{scope}
}
\providecommand{\keywords}[1]{{\small\textit{Keywords:} #1}}
\newcommand{\Xs}{\raisebox{-.3ex}{\includegraphics[scale=0.019]{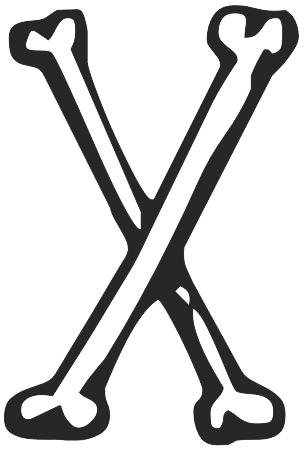}}}
\begin{document}

\author{Nathalie Aubrun,\  Nicolás Bitar}

\date{}

\title{Self-Avoiding Walks on Cayley Graphs Through the Lens of Symbolic Dynamics}
\maketitle
\begin{abstract}
\hskip -.2in
\noindent
We study dynamical and computational properties of the set of bi-infinite self-avoiding walks on Cayley graphs, as well as ways to compute, approximate and bound their connective constant. To do this, we introduce the skeleton $\Xs_{G,S}$ of a finitely generated group $G$ relative to a generating set $S$, which is a one-dimensional subshift made of configurations on $S$ that avoid all words that reduce to the identity. We provide a characterization of groups which have SFT skeletons and sofic skeletons: first, there exists a finite generating set $S$ such that $\Xs_{G,S}$ is a subshift of finite type if and only if $G$ is a plain group; second, there exists $S$ such that $\Xs_{G,S}$ is sofic if and only if $G$ is a plain group, $\mathbb{Z}\times\mathbb{Z}/2\mathbb{Z}$ or $\mathcal{D}_{\infty}\times\mathbb{Z}/2\mathbb{Z}$. We also characterize finitely generated torsion groups as groups whose skeletons are aperiodic.

For connective constants, using graph height functions and bridges, we show that Cayley graphs of finitely generated torsion groups do not admit graph height functions, and that for groups that admit transitive graph height functions, the connective constant is equal to the growth rate of periodic points of the skeleton. Finally, we take a brief look at the set of bi-infinite geodesics and introduce an analog of the connective constant for the geodesic growth.
\vskip .1in
\noindent\keywords{Self-avoiding walk, Cayley graph, connective constant, subshift of finite type, sofic subshift, graph height function, geodesics, entropy.}

\end{abstract}

%=-=-=-=-=-=-=-=-=-=-=-=-=-=-=-=-=-=-=-=-=-=-=-=-=-=-=-=-=-=-=-=-=-=-=-=-=-=-=
%
%=-=-=-=-=-=-=-=-=-=-=-=-=-=-=-=-=-=-=-=-=-=-=-=-=-=-=-=-=-=-=-=-=-=-=-=-=-=-=

\section{Introduction}

In this article we study bi-infinite self-avoiding walks on Cayley graphs of finitely generated groups from the point of view of symbolic dynamics and group theory.
A self-avoiding walk is a path on a graph that visits a vertex at most once. Figure~\ref{fig:introsaw} shows an example of a self-avoiding walk on the hexagonal grid. These walks were originally introduced by Flory for the study of long-chain polymers~\cite{flory1949configuration}. Although the original setting was the infinite square grid, self-avoiding walks are more generally studied in the context of infinite quasi-transitive graphs, intersecting with areas such as combinatorics, probability and statistical physics. The fundamental problem in this area is the study of the asymptotic growth rate of the number of self-avoiding walks of a given length, called the connective constant. See~\cite{grimmet2017self} for a recent survey on this problem. Recently, there has been increasing interest in the study of the set of all self-avoiding walks on edge-labeled graphs from the point of view of formal language theory~\cite{lindorfer2020language,lehner2023saw}. We take this study further by focusing on both bi-infinite self-avoiding walks and bi-infinite geodesics on Cayley graphs of finitely generated groups.

\begin{figure}[!ht]
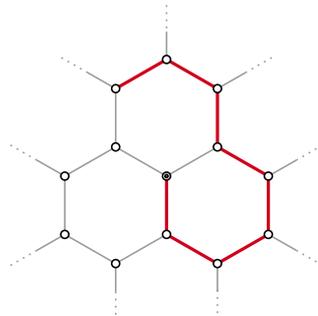

    \centering
    \includestandalone[scale=0.8]{figures/sawIntro}
    \caption{A self-avoiding walk, marked in red, on the hexagonal grid.}
    \label{fig:introsaw}
\end{figure}

Given a finitely generated group $G$, and a symmetric finite generating set $S$, we introduce the skeleton of $G$ with respect to $S$, denoted $\Xs_{G,S}$, as the set of labels of bi-infinite self-avoiding walks on the Cayley graph $\Gamma(G,S)$. The name, \emph{skeleton}, comes from the study of the decidability of tiling problems on groups, particularly the infinite snake problem~\cite{aubrun2023domino}. The skeleton $\Xs_{G,S}$ is equivalently defined as the set of bi-infinite words $x\in S^{\Z}$ that contain no factors representing the identity in $G$ (see Sections~\ref{subsec:snake} and~\ref{subsec:SAWs} for formal definitions). This set admits a $\Z$-action through the shift operation, making it a symbolic dynamical system, commonly refered to as a subshift. Our present goal is to establish connections between the dynamical properties $\Xs_{G,S}$, and geometric and algebraic properties of the underlying group and Cayley graph. We will explore how the skeleton provides a way to translate results from groups to shift spaces.\\

The article is divided into three parts. First, we look at the relation between the skeletons' dynamical properties and the underlying group's properties. We study which groups admit skeletons that are subshifts of finite type, sofic, effective, or minimal, and study their periodic points. We completely characterize the first two properties and provide partial results for the rest. Second, we look at the skeleton's entropy. Using a result independently established by Rosenfeld~\cite{rosenfeld2022finding} for subshifts and Grimmet et al.~\cite{grimmett2014extendable} for self-avoiding walks, we show the topological entropy of the skeleton is given by
$$h(\Xs_{G,S}) = \log(\mu(G,S)),$$
where $\mu(G,S)$ is the connective constant of the Cayley graph $\Gamma(G,S)$. This answers one of the questions from Problem 108 posed by Rufus Bowen in his notebook of problems~\cite{bowen108}. With this connection, we give some results on the approximation of connective constants and the existence of graph height functions. Lastly, we focus our attention on a subset of the skeleton composed of all bi-infinite geodesics, which we call the geodesic skeleton. We study its dynamic properties and introduce a geodesic analog of the connective constant.

% -=-=-=-=-=-=-=-=-=-=-=-=-=-=-=-=-=-=-=-=-=-=-=-=-=-=-=-=-=-=-
\subsection*{Characterizing Classes of Subshifts}

An important class of subshifts in the theory of symbolic dynamics is the class of subshifts of finite type. These subshifts are defined as sets of bi-infinite sequences that avoid a finite set of forbidden patterns. In this article, we completely classify groups that admit a skeleton that is a subshift of finite type.

\begin{thmx}
\label{intro:SFT}
    Let $G$ be a finitely generated group. Then, there exists a finite generating set $S$ such that
    $\Xs_{G,S}$ is an SFT if and only if $G$ is a plain group.
\end{thmx}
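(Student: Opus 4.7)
I would prove the two directions separately.

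For \emph{plain $\Rightarrow$ SFT}: let $G = F_n * H_1 * \cdots * H_k$ be a plain group and take the generating set $S = \{a_1^{\pm 1}, \dots, a_n^{\pm 1}\} \cup \bigcup_{i=1}^{k}(H_i \setminus \{1\})$. The key geometric fact I would exploit is that the Cayley graph $\Gamma(G,S)$ has a \emph{tree-of-cliques} structure: each coset $gH_i$ spans a complete subgraph on $|H_i|$ vertices, every cycle lies inside such a clique, and collapsing these cliques produces the Bass--Serre tree of the free-product decomposition. I would set
\[
  \Fo \;=\; \{a a^{-1} : a \text{ a free generator}\} \;\cup\; \bigcup_{i=1}^{k}\{w \in (H_i \setminus \{1\})^{*} : |w| \leq |H_i|,\ \overline{w} = 1_G\},
\]
and then verify $\Xs_{G,S} = X_{\Fo}$: a word avoiding $\Fo$ never backtracks across cliques and has no identity subfactor within any single clique, hence traces a self-avoiding walk; conversely any word representing $1_G$ reduces, by successive collapses of consecutive letters from the same factor, to the empty word, and at least one of these collapses produces an element of $\Fo$ as a subfactor.

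For \emph{SFT $\Rightarrow$ plain}, suppose $\Xs_{G,S} = X_{\Fo}$ with $\max_{w \in \Fo}|w| \leq N$. My first step would be to show that the set of \emph{minimal identity words} of $(G,S)$ — words $w$ with $\overline{w} = 1_G$ whose proper nonempty factors do not represent the identity — is finite with length at most $N$. Indeed, such a $w$ is not a factor of any element of $\Xs_{G,S}$ but each of its proper factors is (by an extendability argument inside the SFT), so $w$ is a minimal forbidden pattern of $\Xs_{G,S}$, which in window size $N$ forces $|w| \leq N$. With minimal identity words bounded, the word problem of $G$ can be described locally, so $G$ is virtually free by the Muller--Schupp theorem. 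I would then derive the plain structure by analyzing the Stallings/Bass--Serre splitting of $G$ and showing that edge groups are trivial while vertex groups are finite, yielding the free-product decomposition $G = F_n * H_1 * \cdots * H_k$.

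The hardest step is the final one, since many virtually free groups are not plain (for instance, amalgamated products of finite groups over a common nontrivial finite subgroup). Ruling these out requires more than boundedness of minimal relators: the crucial additional input is that minimal identity words here arise as \emph{self-intersections} of walks on $\Gamma(G,S)$, a geometric condition which I expect forces the edge groups of the splitting to be trivial. Making this passage from the SFT property to the specific plain decomposition is the part of the proof I anticipate to be the most technical, and it is where the self-avoiding-walk interpretation (as opposed to the mere relator interpretation) must be used in an essential way.
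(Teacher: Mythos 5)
Your forward direction (plain $\Rightarrow$ SFT) is essentially the paper's argument: the free-product normal form forces every self-avoiding polygon to lie inside a single finite factor, so the set $\Ou_{G,S}$ of SAPs together with the free backtracks is finite, and by the paper's Lemma~\ref{lem:ouro} this finite set suffices to define $\Xs_{G,S}$.

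The reverse direction is where the proposal diverges, and it has a genuine gap. You assert that a minimal identity word $w$ (i.e.\ a SAP) is a \emph{minimal forbidden pattern} of $\Xs_{G,S}$ because each proper factor of $w$ lies in $\mathcal{L}(\Xs_{G,S})$, ``by an extendability argument inside the SFT.'' This is not automatic: proper factors of a SAP are finite SAWs, hence locally admissible for the set $\WP(G,S)$, but local admissibility does not imply bi-infinite extendability (the paper gives an explicit counterexample on $\Z^2$, Figure~\ref{fig:finite_SAW_Z2_non_infinite}). Nor is it clear that these factors avoid the unknown set $\Fo$, which need not be contained in $\WP(G,S)$. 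The paper avoids this entirely by arguing contrapositively: given a SAP $W$ longer than the window $N$, the periodic configuration $W^\infty$ has all its length-$N$ factors locally admissible (every cyclic permutation of a SAP is a SAP, so has no proper factor in $\WP(G,S)$), hence $W^\infty \in \X_\Fo$, yet $W^\infty \notin \Xs_{G,S}$ — contradiction. The paper even splits into two cases depending on whether $\Fo \subseteq \WP(G,S)$, and the second case requires an additional bounded-extension argument ($N_w$) that your outline has no analogue of.

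Even granting bounded SAPs, your plan to pass through Muller--Schupp (virtual freeness) and then a Bass--Serre splitting analysis re-derives, under worse hypotheses, a statement that already exists in the literature. The paper uses Haring-Smith's theorem (Theorem~\ref{thm:HS}): a group is plain if and only if it admits a generating set for which all simple cycles in the Cayley graph have bounded diameter. With that cited, ``SAPs bounded $\Rightarrow$ plain'' is immediate, and the step you single out as the hardest (ruling out virtually free groups with nontrivial edge groups, such as amalgams of finite groups over a nontrivial finite subgroup) disappears entirely. I would recommend you fix the extendability gap by switching to the $W^\infty$ periodic-point argument and replace the Muller--Schupp/Bass--Serre route by a direct appeal to Haring-Smith.
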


Plain groups are a subset of virtually free groups defined as free products of a finite number of finite groups and a free group (Definition~\ref{def:plain}). The connective constants of some Cayley graphs of plain groups were previously studied by Gilch and M{\"u}ller~\cite{gilch2017counting}. We refine their result by showing that, when the skeleton is a subshift of finite type, the connective constant is a non-negative rational power of a Perron number.\\

A bigger class of subshifts is the class of sofic shifts. A subshift is said to be sofic if the language of its finite factors is regular. For the skeleton, this is equivalent to the language of labels of bi-infinitely extendable self-avoiding walks being regular. Lindorfer and Woess showed that the set of labels of finite length self-avoiding walks on a connected quasi-transitive locally finite deterministically-labeled infinite graph is regular if and only if the graph has only thin ends, all of them of size 1~\cite{lindorfer2020language}. For Cayley graphs, through a theorem of Haring-Smith (Theorem~\ref{thm:HS}), this can be shown to be exactly the class of plain groups. Nevertheless, when working with bi-infinitely extendable self-avoiding walks, there exist Cayley graphs (in particular the bi-infinite ladder graph) with ends of size 2 where the language of bi-infinite extendable walks is regular, i.e. its skeleton $\Xs_{G,S}$ is sofic. We classify all groups that admit a Cayley graph for which the skeleton is sofic.

\begin{thmx}
\label{intro:sofic}
    Let $G$ be a finitely generated group. There exists $S$ such that $\Xs_{G,S}$ is sofic if and only if $G$ is a plain group, $\Z\times  \Z/2\Z$ or $\D_{\infty}\times  \Z/2\Z$.
\end{thmx}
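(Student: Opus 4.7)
The plan is to handle the two directions with rather different tools.

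\textbf{The ``if'' direction} is essentially constructive. For plain groups, Theorem~\ref{intro:SFT} already produces an SFT skeleton, which is sofic a fortiori. For $G=\Z\times\Z/2\Z$ I take $S=\{a,a^{-1},b\}$ with $a$ generating the $\Z$-factor and $b$ the $\Z/2\Z$-factor; for $G=\D_\infty\times\Z/2\Z$ I take $S=\{a,b,c\}$ with $a,b$ the two involutions of $\D_\infty$ and $c$ the involution of $\Z/2\Z$. In both cases the Cayley graph is a labeled infinite ladder. Since SAWs on a ladder are highly constrained---any bi-infinite SAW decomposes into a short ``middle'' piece together with two infinite ``tails'', each of which follows one of finitely many asymptotic patterns (monotone along a rail, or an eventually periodic zigzag exchanging rails at every other step)---I would write down explicitly a finite automaton whose states encode the current asymptotic regime, the active rail, and whether a rung has just been crossed; this yields a sofic presentation of $\Xs_{G,S}$.

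\textbf{The ``only if'' direction} is the substantive part, and I would approach it via the end structure of $\Gamma(G,S)$. Assuming $\Xs_{G,S}$ is sofic, the language of labels of bi-infinitely extendable SAWs is regular. The central step is to bound the sizes of the ends: I would prove that every end of $\Gamma(G,S)$ must be thin of size at most~$2$. This parallels the Lindorfer--Woess characterization of regularity for the \emph{full} SAW language, but must be adapted to the weaker class of extendable walks. Given an end of size~$\geq 3$ or a thick end, I would exhibit a family of extendable finite patterns whose right-contexts are pairwise Myhill--Nerode inequivalent, using three essentially disjoint rays escaping through the end to encode unbounded information about the extension, contradicting regularity.

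With the end-size bound in hand, the classification proceeds by case analysis on the number of ends. If all ends have size~$1$, the Lindorfer--Woess theorem applied to the full SAW language, together with Haring-Smith's theorem (Theorem~\ref{thm:HS}), forces $G$ to be plain. If $G$ has two ends and some end of size~$2$, then $G$ is virtually cyclic, and a direct analysis of virtually cyclic groups whose Cayley graph is a labeled ladder narrows $G$ down to $\Z\times\Z/2\Z$ or $\D_\infty\times\Z/2\Z$. If $G$ has infinitely many ends, I would invoke Stallings' theorem to split $G$ over a finite subgroup and propagate the end-size bound to each factor of the Bass--Serre decomposition, forcing $G$ itself to be plain.

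The main obstacle I expect is the end-size bound. Unlike in Lindorfer--Woess, the patterns one must distinguish in a Myhill--Nerode argument must now be bi-infinitely extendable, so any pumping-style construction has to simultaneously supply coherent infinite left- \emph{and} right-extensions while preserving the distinguishability of the patterns. Arranging the multiple rays escaping through an offending end so that they share a common bi-infinite self-avoiding extension, yet still produce an unbounded family of Myhill--Nerode classes, will be the delicate part of the argument.
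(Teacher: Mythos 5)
Your high-level outline overlaps substantially with the paper: the forward direction uses Theorem~\ref{intro:SFT} for plain groups and a ladder analysis for the two extra groups, and the reverse direction hinges on bounding end sizes by a regularity argument and then classifying by ends. However, the crucial case analysis in the ``only if'' direction is organized incorrectly, and the Stallings step as proposed does not work.

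The gap is in how you deal with a size-$2$ end. You split into ``two ends, some end of size~$2$'' and ``infinitely many ends,'' but these do not partition the possibilities \emph{a priori}: a graph with infinitely many ends could also have a thin end of size~$2$ (ends are classified per-end, not globally). The paper's proof must, and does, rule this out with a dedicated argument (Lemma~\ref{lem:2ends_Z}): assuming sofic skeleton, it takes a $g$-strip of size~$2$ coming from the size-$2$ end, uses the transitivity of the Cayley graph to find a third ray escaping through a \emph{different} end from a vertex inside the ladder region, and pumps words of the form $\lambda_2\lambda_3^n\lambda_2^{-1}\lambda_1^{-k}$ which are globally extendable precisely because the third ray gives a way out. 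The conclusion is that a size-$2$ end forces $G$ to be virtually $\Z$ (in particular two-ended), and then Epstein--Wall plus a bound $\text{(end size)} \geq |F|$ for $G = H \ltimes_\phi F$ finishes the virtually-$\Z$ classification. Your Stallings-based alternative is both incomplete and unlikely to close cleanly: end sizes of $\Gamma(G,S)$ are heavily sensitive to the choice of~$S$, so there is no canonical way to ``propagate the end-size bound to each factor of the Bass--Serre decomposition'' --- the vertex groups carry their own generating sets, and nothing in the hypothesis controls the ends of \emph{their} Cayley graphs. Moreover, even if it could be made to work for size-$1$ ends, it does not address the size-$2$ end case that is the actual crux. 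You should replace the Stallings step with a direct pumping argument showing that a size-$2$ end plus soficity forces the group to be two-ended.

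A smaller point: the virtually-$\Z$ case produces not just $\Z\times\Z/2\Z$ and $\D_\infty\times\Z/2\Z$ but also $\Z$ and $\D_\infty$ themselves; the latter two are plain, so they are absorbed into the first alternative of the statement, but your ``narrows $G$ down to'' phrasing elides this. Also, for the ``if'' direction the paper does not build an automaton directly; it exhibits an explicit regular forbidden-pattern set (e.g.\ $\{st^{\pm n}st^{\mp 1}\} \cup \{t^{\pm 1}st^{\mp n}s\} \cup \{s^2, t^{\pm1}t^{\mp1}\}$ for $\Z\times\Z/2\Z$) and verifies it defines exactly the skeleton; your automaton sketch is plausible but would need to be checked carefully against SAPs of the ladder.
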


This result should be contrasted with Proposition~\ref{prop:not_sofic} that shows that every group admits a generating set such that the skeleton is not sofic. In other words, the property of having a sofic skeleton is dependent on the Cayley graph. As is the case for subshifts of finite type, when the skeleton is sofic its connective constant is a non-negative rational power of a Perron number.\\

Lastly, from our study of effective skeletons, we provide an alternative proof of the existence of effective one-dimensional subshift that has no computable configurations (Proposition~\ref{prop:monster}).

% -=-=-=-=-=-=-=-=-=-=-=-=-=-=-=-=-=-=-=-=-=-=-=-=-=-=-=-=-=-=-
\subsection*{Connective Constants and Graph Height Functions}

The next part of the article is concerned with finding ways to approximate connective constants through periodic configurations and finding lower bounds when they are not available.\\

We show that if a Cayley graph $\Gamma(G,S)$ admits a graph height function (see Definition~\ref{def:graphHeight}), then the skeleton contains periodic points. As shown in~\cite{aubrun2023domino}, torsion groups can be characterized as groups whose skeletons never contain periodic points, independently of the generating set. This allows us to state the following result.
\begin{thmx}
\label{intro:height_function}
        The Cayley graphs of infinite torsion finitely generated groups do not admit graph height functions.
\end{thmx}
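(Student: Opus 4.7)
The plan is to derive the theorem as a short contrapositive argument combining two ingredients, one proved in this paper and one cited from prior work. The heart of the matter is the implication announced in the paragraph preceding the theorem: if $\Gamma(G,S)$ admits a graph height function, then $\Xs_{G,S}$ contains periodic points. Combined with the characterization from \cite{aubrun2023domino} that states that torsion groups are precisely those for which $\Xs_{G,S}$ is aperiodic for every finite symmetric generating set $S$, the theorem is immediate. Concretely, I would suppose for contradiction that $G$ is an infinite torsion finitely generated group and that some Cayley graph $\Gamma(G,S)$ admits a graph height function, then extract a periodic bi-infinite self-avoiding walk and thereby a periodic point of $\Xs_{G,S}$, directly contradicting the characterization.

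The non-trivial content lies in the first ingredient, which I would carry out by a bridge-concatenation argument in the spirit of Kesten. Starting from a graph height function $(h,H)$, I would first use the cofinite-orbit assumption and the local ``$h{\pm}1$ neighbour'' condition to exhibit a \emph{bridge}: a finite self-avoiding walk $\gamma$ starting at $1_G$ and ending at some element $g \in G$ along which $h$ is strictly increasing, with $h(g) - h(1_G) = N > 0$ equal to the length-range of $\gamma$. Because $h$ is invariant under differences by $H$, and the left-translation by $g$ can be shown to preserve height differences, the shifted copies $g^n\gamma$ for $n \in \Z$ all carry disjoint interior height ranges, hence the concatenation $\cdots (g^{-1}\gamma)\gamma(g\gamma)(g^2\gamma)\cdots$ is a genuine bi-infinite self-avoiding walk in $\Gamma(G,S)$. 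Its label is a periodic point of $\Xs_{G,S}$ of period equal to the word length of $\gamma$, provided the underlying group element $g$ has infinite order, which is automatic from $h(g^n)=n\cdot N \to \pm\infty$.

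At this point the argument closes cleanly: the element $g$ above would be an element of infinite order inside a torsion group, a contradiction. Equivalently, running the contrapositive, the torsion hypothesis forbids the appearance of such a bridge, and hence of any graph height function.

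The step I expect to require the most care is verifying that the bridge exists and that its translates genuinely avoid each other, i.e.\ checking that the strict monotonicity of $h$ along $\gamma$ together with the $H$-invariance properties of a graph height function are strong enough to prevent vertex collisions between $g^i\gamma$ and $g^j\gamma$ for $i \ne j$. The rest is bookkeeping. Once this lemma is in place, the proof of the theorem itself reduces to a single line invoking the two characterizations.
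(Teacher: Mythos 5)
Your high-level plan is exactly the paper's: combine the lemma that a graph height function on $\Gamma(G,S)$ forces a periodic point in $\Xs_{G,S}$ (the paper's Lemma~6.3) with the characterization of torsion groups as those whose skeletons are aperiodic for every generating set (Theorem~4.8, from~\cite{aubrun2023domino}), and conclude by contrapositive. That much is right and is a one-liner once the lemma is in place.

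However, your sketch of the key lemma has a gap. You build a bridge $\gamma$ from $1_G$ to $g$ and then iterate it by translating by powers of $g$, asserting that ``the left-translation by $g$ can be shown to preserve height differences.'' That assertion is not part of the definition of a graph height function and is not automatic. The height function is only required to be $H$-difference-preserving, where $H\le\Aut(\Gamma)$ is some subgroup acting quasi-transitively; the vertex $g=\ter(\gamma)$ is a group element, and left-translation by $g$ is an automorphism, but it need not lie in $H$. Without this, you cannot conclude that $h(g^n)=nN$, that the height ranges of the translates $g^n\gamma$ are disjoint, or that each $g^n\gamma$ is itself a bridge, so the self-avoidance of the concatenation fails. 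The paper avoids this by never translating by $g$: it translates by an element $h_2h_1^{-1}\in H$ when $\init(\pi)$ and $\ter(\pi)$ lie in the same $H$-orbit, and when they do not, it extends the bridge through finitely many further bridges and uses a pigeonhole argument over the finitely many $H$-orbits to find two checkpoints in the same orbit, whence a segment that can be iterated. You acknowledge that verifying disjointness of the translates is the delicate step, but the fix is not ``more care'' applied to the same construction --- it is the orbit pigeonhole argument, which is a genuinely different mechanism you would need to add.

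One more small remark: you do not need the observation that $h(g^n)\to\pm\infty$ forces $g$ to have infinite order. Once you have a periodic configuration $w^\infty\in\Xs_{G,S}$, Theorem~4.8's proof already gives that $\overline{w}$ is torsion-free (since $w^n\notin\WP(G,S)$ for all $n$); the detour through the height function is redundant and, as noted, not justified for $g\notin H$.
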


This theorem generalizes a result by Grimmet and Li who showed the Grigorchuk group does not admit graph height functions, and more generally, Cayley graphs of torsion groups with certain conditions on the stabilizer of the identity~\cite{grimmett2015self}.\\

In contrast, using the General Bridge Theorem by Lindorfer~\cite{lindorfer2020bridge}, we can extend a result of Clisby~\cite{clisby2013endless} on the approximation of the connective constant through periodic self-avoiding walks for the square lattice, to groups admitting a particular type of graph height function.
\begin{thmx}
\label{intro:approximation}
    Let $G$ be a finitely generated group and $S$ a finite generating set. If $\Gamma(G,S)$ admits a graph height function $(h, H)$ such that $H$ acts transitively on $\Gamma(G,S)$, then 
    $$\mu(G,S) = \lim_{n\to \infty} \sqrt[n]{e_n},$$
    where $e_n$ denotes the number of periodic points of period $n\in\N$ in $\Xs_{G,S}$ .
\end{thmx}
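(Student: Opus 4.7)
The plan is to prove the two inequalities $\limsup_{n\to\infty} \sqrt[n]{e_n} \le \mu(G,S)$ and $\liminf_{n\to\infty} \sqrt[n]{e_n} \ge \mu(G,S)$ separately. For the upper bound, I would observe that every periodic configuration $x \in \Xs_{G,S}$ with $\sigma^n x = x$ is uniquely determined by its initial factor $x_0 x_1 \cdots x_{n-1}$, which belongs to the language $L_n(\Xs_{G,S})$ of factors of length $n$; therefore $e_n \le |L_n(\Xs_{G,S})|$. The identity $h(\Xs_{G,S}) = \log \mu(G,S)$ recalled earlier in the paper then gives $\lim_n \sqrt[n]{|L_n(\Xs_{G,S})|} = \mu(G,S)$, and the desired bound follows.

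For the lower bound, my plan is to construct an injection from the set of bridges of length $n$ starting at the identity, whose cardinality I denote $b_n$, into the set of periodic configurations of period $n$ in $\Xs_{G,S}$. Given a bridge $(w_0 = e, w_1, \ldots, w_n)$ with labeling $(s_1, \ldots, s_n) \in S^n$, the transitive action of $H$ on $\Gamma(G,S)$ produces an automorphism $\eta \in H$ with $\eta w_0 = w_n$, and the defining property of the graph height function forces the height shift $t_\eta = h(w_n) - h(w_0) > 0$. Setting $w_{kn+j} := \eta^k w_j$ extends the bridge to a bi-infinite walk whose vertices across distinct period blocks have heights in the disjoint intervals $(h(w_0) + k t_\eta,\, h(w_n) + k t_\eta]$, so the walk is self-avoiding. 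Arranging $\eta$ to be the label-preserving automorphism given by left multiplication by $g = s_1 \cdots s_n$, whose compatibility with $h$ is exactly what the transitivity assumption on the labeled Cayley graph supplies, makes the labeling of the extended walk $n$-periodic, so it defines a configuration $x \in \Xs_{G,S}$ with $\sigma^n x = x$. Distinct bridges produce distinct initial factors $s_1 \cdots s_n$ and hence distinct periodic configurations, giving $b_n \le e_n$. Applying the General Bridge Theorem of Lindorfer~\cite{lindorfer2020bridge}, which asserts $\lim_n \sqrt[n]{b_n} = \mu(G,S)$ under the graph height function assumption, completes the lower bound.

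The main subtlety is ensuring that $\eta$ may be chosen simultaneously height-shifting and label-preserving, so that the extended walk not only is self-avoiding (which follows from the height function) but also inherits an $n$-periodic labeling that defines a genuine periodic point of $\Xs_{G,S}$; this is precisely the step where transitivity of $H$ on the labeled Cayley graph $\Gamma(G,S)$ is essential, while the rest of the argument is a direct combination of Lindorfer's General Bridge Theorem and the standard entropy bound for periodic points in subshifts.
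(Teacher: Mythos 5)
Your strategy is essentially the paper's (bound $e_n$ above by the number of admissible words and below by the number of bridges, then apply the General Bridge Theorem), and your inline construction of a periodic point from a bridge is a specialization of Lemma~\ref{lem:bridge} to the transitive case. However, the lower bound has a concrete gap: the General Bridge Theorem (Theorem~\ref{thm:bridge}) asserts $\mu(\Gamma) = \max\{\beta(\Gamma, h), \beta(\Gamma, -h)\}$, not $\lim_n \sqrt[n]{b_n} = \mu(G,S)$. What your supermultiplicative sequence gives is $\lim_n \sqrt[n]{b_n} = \beta(\Gamma, h)$, which a priori may be strictly smaller than $\mu(G,S)$; note the bridge definition has a strict inequality at the initial vertex and a non-strict one at the terminal vertex, so reversing an $h$-bridge need not yield a $(-h)$-bridge, and there is no automatic equality of the two $\beta$'s. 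Thus $b_n \le e_n$ only yields $\beta(\Gamma, h) \le \lim_n\sqrt[n]{e_n} \le \mu(G,S)$, which does not close. The fix, exactly as the paper does, is to run the bridge-iteration argument for $(-h, H)$-bridges as well, obtaining $\max\{b_n, \bar{b}_n\} \le e_n$ and hence $\max\{\beta(\Gamma, h), \beta(\Gamma, -h)\} \le \lim_n\sqrt[n]{e_n}$, which matches the $\max$ in Theorem~\ref{thm:bridge} and squeezes.

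Secondarily, your claim that "arranging $\eta$ to be the label-preserving automorphism given by left multiplication by $g$" is what transitivity supplies is not quite right. Transitivity of $H$ gives some $\eta \in H$ with $\eta \cdot 1_G = g$, but that automorphism need not be a left translation, while left multiplication by $g$ need not lie in $H$ and hence need not be $h$-difference-preserving. Your construction needs a single automorphism that is simultaneously height-shifting (for self-avoidance) and label-preserving (for periodicity of the word), and these coincide exactly when $H$ acts by left translations, i.e. the strong graph height function setting $H = G$. Rather than reproving this inline, you can cite Lemma~\ref{lem:bridge}, which packages the iteration for any bridge whose endpoints lie in the same $H$-orbit; under transitivity this applies to every bridge.
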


The next step is to look at torsion groups to find ways of approximating their connective constants without the use of periodic points. To do this, we use a counting method popularized by Rosenfeld~\cite{rosenfeld2020colorings,rosenfeld2022finding}, that provides us with a method to find lower bounds on any graph by finding solutions to an inequality dependent on the number of simple cycles on the graph (see Proposition~\ref{prop:rosenfeld}).

% -=-=-=-=-=-=-=-=-=-=-=-=-=-=-=-=-=-=-=-=-=-=-=-=-=-=-=-=-=-=-
\subsection*{Bi-infinite Geodesics}

A sub-class of self-avoiding walks that are of special interest are geodesics. A geodesic is a shortest path between two points on the Cayley graph. We define the geodesic skeleton, $\Xs^g_{G,S}$ of a group $G$ with respect to the generating set $S$ as the set of all bi-infinite geodesics on the corresponding Cayley graph.
We obtain partial results on the classification of groups that admit geodesic skeletons that are subshifts of finite type, sofic and effective. In addition, we show that the characterization of torsion groups through periodic points also holds in this case.
\begin{thmx}
    Let $G$ be a finitely generated group. The following are equivalent,
    \begin{itemize}
        \item  $G$ is a torsion group,
        \item $\Xs_{G,S}$ is aperiodic for all (some) generating sets $S$,
        \item $\Xs^g_{G,S}$ is aperiodic for all (some) generating sets $S$.
    \end{itemize}
\end{thmx}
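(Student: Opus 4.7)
The equivalence of the first two items is established in~\cite{aubrun2023domino}, so it suffices to show that the third item is also equivalent to these. Since every bi-infinite geodesic is in particular a bi-infinite self-avoiding walk, the inclusion $\Xs^g_{G,S} \subseteq \Xs_{G,S}$ holds for every symmetric finite generating set $S$, so aperiodicity of $\Xs_{G,S}$ immediately passes to $\Xs^g_{G,S}$. This gives the implication from the second item to the third under both the ``for all'' and the ``for some'' quantifications.

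The nontrivial direction is the contrapositive: if $G$ contains an element of infinite order, then for every symmetric finite generating set $S$ the geodesic skeleton $\Xs^g_{G,S}$ contains a periodic configuration. The plan is to exhibit an infinite-order element $h \in G$ that is \emph{cyclically geodesic} with respect to $S$, meaning $|h^n|_S = n\cdot |h|_S$ for every $n \geq 1$. Given such an $h$, any geodesic word $w \in S^\ast$ representing $h$ satisfies that $w^n$ is a geodesic word for $h^n$; consequently the periodic bi-infinite word $\ldots www \ldots \in S^{\Z}$ labels a bi-infinite geodesic in $\Gamma(G,S)$, yielding the desired periodic configuration in $\Xs^g_{G,S}$.

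To construct such a cyclically geodesic element I plan to minimize the ratio $|g^n|_S / n$ over all pairs $(g,n)$ with $g \in G$ of infinite order and $n \geq 1$. If the infimum $\rho$ is attained by some pair $(g_0, n_0)$, then $h := g_0^{n_0}$ is cyclically geodesic: for every $k \geq 1$, the pair $(g_0, n_0 k)$ yields $|h^k|_S/(n_0 k) \geq \rho = |h|_S/n_0$, so $|h^k|_S \geq k|h|_S$, while Fekete's subadditivity lemma applied to $n \mapsto |g_0^n|_S$ provides the reverse inequality.

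The main obstacle is ensuring attainment of $\rho$, which can fail naively in the presence of exponentially distorted cyclic subgroups (where the infimum drops to $0$ without being realized). The strategy is to restrict the optimization to elements in the ball $B_S(R)$ of bounded radius, exploit the integrality of the norms $|g^n|_S$, and apply a pigeonhole argument combined with the finiteness of $B_S(R)$ to extract a minimizer. Should this direct optimization not suffice, a fallback is to work dynamically on the non-empty compact subshift $\Xs^g_{G,S}$, pass to a minimal subsystem under the shift, and leverage the left multiplication action of $G$ on the Cayley graph together with the existence of an infinite-order element to argue that this minimal subsystem must contain a periodic orbit.
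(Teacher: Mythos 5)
Your logical frame matches the paper's: the torsion $\Leftrightarrow$ aperiodicity equivalence for $\Xs_{G,S}$ is quoted, the inclusion $\Xs^g_{G,S}\subseteq \Xs_{G,S}$ carries aperiodicity from the larger shift to the smaller one, and everything reduces to producing a periodic configuration in $\Xs^g_{G,S}$ from a torsion-free element. The gap is exactly where you flag it, and neither of your proposed repairs closes it. By Fekete's lemma the quantity $\inf_{n\geq 1}\|g^n\|_S/n$ equals the translation length $\tau(g)=\lim_n\|g^n\|_S/n$, which is $0$ whenever $\langle g\rangle$ is distorted (the center of the discrete Heisenberg group is the standard example) and in general is a limit not attained at any finite $n$. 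Restricting $g$ to a ball $B_S(R)$ does not help: the optimization in $n$ is still over an infinite index set, and a single distorted $g\in B_S(R)$ already forces the infimum over pairs down to $0$, unattained. The dynamical fallback fails just as surely: an infinite minimal subshift contains no periodic point whatsoever (Sturmian subshifts are the textbook example), and nothing in your setup forces the minimal subsystem of $\Xs^g_{G,S}$ to be finite.

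The paper sidesteps attainment by optimizing a different quantity. For a fixed torsion-free $g$, Proposition~\ref{prop:torsion-free} takes $k=\argmin\{\|g^n\|_S\mid n\geq 1\}$, the minimum of the lengths themselves rather than their normalizations by $n$; this minimum exists for free, because $\{\|g^n\|_S : n\geq 1\}$ is a set of positive integers. The price paid is that cyclic geodesicity no longer falls out of subadditivity as in your argument; instead the paper argues geometrically, adapting Halin's construction of automorphism-invariant double rays, that for a geodesic word $w$ of $g^k$ the bi-infinite word $w^\infty$ is a self-avoiding geodesic in the Cayley graph. The shape of your plan is right; the missing ingredient is this specific choice of objective, length rather than length-per-step, which trades a harder-to-attain minimizer for a harder-to-verify geodesicity.
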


Finally, we introduce a geodesic analog of the connective constant. If we take $\Gamma_{G,S}$ the geodesic growth function of $G$ with respect to $S$, we define the geodesic connective constant of a Cayley graph as the limit,
$$\mu^g(G,S) = \lim_{n\to\infty}\sqrt[n]{\Gamma_{G,S}(n)}.$$
The geodesic growth of groups has been extensively studied, especially in the case of virtually nilpotent groups~\cite{bridson2012groups,bishop2020virtually,bishop2021geodesic,bodart2023intermediate}. As was the case with the skeleton, the entropy of the geodesic skeleton is the logarithm of the geodesic connective constant. We are able to explicitly compute the geodesic connective constants for lattices with known (or well approximated) connective constants such as the square grid, ladder graph ($\mathbb{L}$) and hexagonal grid ($\mathbb{H}$):
\begin{itemize}
    \item $\mu^{g}(\Z^2) = 2$,
    \item  $\mu^{g}(\mathbb{L}) = 1$,
    \item $\mu^{g}(\mathbb{H})=\sqrt{2}$.
\end{itemize}

% -=-=-=-=-=-=-=-=-=-=-=-=-=-=-=-=-=-=-=-=-=-=-=-=-=-=-=-=-=-=-
\paragraph{Structure of the Article}

The paper is organized as follows. Section~\ref{section.preliminaries} is devoted to definitions and background on symbolic dynamics, combinatorial group theory ans self-avoiding walks. Section~\ref{section.general_properties} surveys general properties of the skeleton subshift, and shows how its entropy corresponds to the logarithm of the connective constant of the corresponding Cayley graph. In Section~\ref{section.forbid_periodic_computational} we investigate how dynamical and computational properties of the skeleton subshift -- existence of periodic configurations in $\Xs_{G,S}$, minimality of $\Xs_{G,S}$, being SFT or effective -- relate to properties on the group $G$ itself. Next, in Section~\ref{section.sofic_skeleton} we provide a characterization of groups that admit sofic skeletons. To do this we also introduce notions from the study of thin and thick ends of graphs, and automorphisms of graphs. In Section~\ref{section.entropy_CC} we use the skeleton to get new results on entropy and connective constant. We begin by looking at graph height functions and bridges, and their relation to periodicity in the skeleton. Then, we use Rosenfeld's counting method to provide lower bounds on the connective constant based on the number of simple cycles on the Cayley graph of a given length. Finally, Section~\ref{seciton.geodesic} is devoted to the study of the geodesic skeleton and the geodesic connective constant.

%%=-=-=-=-=-=-=-=-=-=-=-=-=-=-=-=-=-=-=-=-=-=-=-=-=-=-=-=-=-=-=-
\section{Background and Definitions}
\label{section.preliminaries}

Given an alphabet $A$, we denote by $A^n$ the set of words on $A$ of length $n$, $A^{\leq n}$ the set of words of length at most $n$, and $A^*$ the set of all finite length words including the empty word $\epsilon$. Furthermore, we denote by $A^+ = A^*\setminus\{\epsilon\}$ the set of non-empty finite words over $A$. A factor $v$ of a word $w$ is a contiguous subword of $w$; we denote this by $v \factor w$. For a bi-infinite word $x\in A^\Z$, given $i,j\in\Z$, $x_{[i,j]}$ denotes the word $x_ix_{i+1}\ ...\ x_j$, $x_{[j,+\infty)}$ the infinite word stating at $j$, and $x_{(-\infty,i]}$ the infinite word finishing at $i$. For a word $w\in A^*$, the expression $w^{\infty}$ denotes the infinite word obtained by repeating $w$. We denote the free group defined by the free generating set of size $n$ by $\F_n$, and $\F_S$ the free group generated by $S$. The commutator of two group elements $g,h$ is denoted by $[g,h]=ghg^{-1}h^{-1}$.

%=-=-=-=-=-=-=-=-=-=-=-=-=-=-=-=-=-=-=-=-=-=-
\subsection{Symbolic Dynamics}
\label{subsec:symbolic}
Given a finite alphabet $A$, we define the \define{full-shift} over $A$ as the set of maps $A^{\Z} = \{x:\Z\to A\}$. We call maps $x:\Z\to A$ \define{configurations}. There is a natural $\Z$-action on the full-shift called the \define{shift}, $\sigma:A^{\Z}\to A^{\Z}$, given by $\sigma(x)_{i} = x_{i+1}$. The full-shift is also endowed with the prodiscrete topology, making it a compact space.

Let $F\subseteq\Z$ be a finite connected subset. A \define{pattern} of support $F$ is an element $p\in A^F$. We say a pattern $p\in A^F$ appears in a configuration $x\in A^{\Z}$ if there exists $k\in\Z$ such that $x_{k+i} = p_i$ for all $i\in F$. Given a set of patterns $\Fo$, we define the set of configurations where no pattern from $\Fo$ appears as,
$$\X_{\Fo} \coloneqq \{x\in A^{\Z}\mid \forall p\in\mathcal{F}, \  p \text{ does not appear in } x\}.$$

A \define{subshift} is a subset of the full-shift $X\subseteq A^\Z$ such that there exists a set of patterns $\Fo$ that verifies $X = \X_{\Fo}$. Subshifts are equivalently defined as closed $\sigma$-invariant subsets of the full-shift. We say a subshift $X$ is
\begin{itemize}
    \item a \define{subshift of finite type} (SFT) if there exists a finite $\Fo$ such that $X = \X_{\Fo}$,
    \item \define{sofic} if there exists a regular $\Fo$ such that $X = \X_{\Fo}$,
    \item \define{effective} if there exists a decidable $\Fo$ such that $X = \X_{\Fo}$.
\end{itemize}
Each class is strictly contained in the next.\\

The \define{language} of a subshift $X$, denoted $\lang(X)$, is defined as the set of all contiguous patterns that appear within some configuration from $X$. Formally, 
$$\lang(X) = \{w\in A^*\mid \exists x\in X, w\factor x\}.$$
Any subshift can be defined by taking the complement of its language as forbidden patterns, that is, $X = \X_{\mathcal{L}(X)^c}$. Furthermore, a subshift is sofic if and only if its language is regular. Similarly, for a set of forbidden patterns we define the language of \define{locally admissible patterns}, $\lang_{loc}(\Fo)$, as the set of words $w\in A^*$ which contain no patterns from $\Fo$. Notice that $\lang(\X_{\Fo})\subseteq \lang_{loc}(\Fo)$, but is often not equal, and $\X_{\Fo} = \X_{\lang_{loc}(\Fo)^c}$.\\

An important measure of the combinatorial properties of the subshift is its \define{complexity}, $p_{X}$, defined as $p_X(n)=|\lang(X)\cap A^{n}|$, which counts the amount of words of a given length present in the subshift's language. Because the complexity is a submultiplicative function, that is, $p_X(m+n)\leq~p_X(m)p_X(n)$, Fekete's Lemma allows us to define the \define{asymptotic word growth rate} as 
$$\alpha^{\infty}(X) = \lim_{n\to\infty}\sqrt[n]{p_X(n)}.$$
Analogously, we define the function $q(n) = |\lang_{loc}(X)\cap A^n|$, which is also submultiplicative, and thus $\alpha(X) = \lim_{n\to\infty}\sqrt[n]{q(n)}$ exists. Rosenfeld showed this quantity is equal to the asymptotic word growth rate.

\begin{lemma}[\cite{rosenfeld2022finding}]
\label{lem:global_local}
    For $\Fo\subseteq A^+$, $\alpha(\X_{\Fo}) =\alpha^{\infty}(X_{\Fo})$.
\end{lemma}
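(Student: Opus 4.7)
The plan is to reduce the general case to subshifts of finite type by approximation, then handle the SFT case via spectral analysis of a directed graph.

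The inequality $\alpha^{\infty}(\X_{\Fo}) \leq \alpha(\X_{\Fo})$ is immediate from $p_X(n) \leq q(n)$. For the converse, I would first dispose of the case when $\Fo$ is finite. A higher-block recoding realises $\X_{\Fo}$ as the vertex shift on a finite directed graph $\Gamma$, so $q(n)$ grows as $\rho(\Gamma)^n$, the spectral radius of the adjacency matrix. The quantity $p_X(n)$ counts factors of bi-infinite paths in $\Gamma$, whose growth rate equals the maximum spectral radius among strongly connected components (SCCs) of $\Gamma$ that contain a cycle. Since any SCC of positive spectral radius must contain a cycle (automatic for SCCs of size at least two, and equivalent to the existence of a self-loop for singletons), every SCC contributing to $\rho(\Gamma)$ also supports bi-infinite paths. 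Hence $\alpha(\X_{\Fo}) = \alpha^{\infty}(\X_{\Fo})$ in the SFT case.

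For arbitrary $\Fo \subseteq A^+$, I would approximate $\X_{\Fo}$ by the decreasing family of SFTs $\X^{(k)} := \X_{\Fo \cap A^{\leq k}}$, with $\X_{\Fo} = \bigcap_k \X^{(k)}$. Two observations are key: first, $q_{\X^{(k)}}(n) = q_{\X_{\Fo}}(n)$ as soon as $k \geq n$, since patterns longer than $n$ cannot appear in words of length $n$; second, a standard compactness argument on $A^{\Z}$ yields $\lang(\X_{\Fo}) = \bigcap_k \lang(\X^{(k)})$, whence by finiteness of $A^n$ we also have $p_{\X^{(k)}}(n) = p_{\X_{\Fo}}(n)$ for every $k$ large enough depending on $n$. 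Together with the monotonicity of $\alpha(\X^{(k)})$ and $\alpha^{\infty}(\X^{(k)})$ in $k$, these equalities give $\alpha(\X^{(k)}) \to \alpha(\X_{\Fo})$ and $\alpha^{\infty}(\X^{(k)}) \to \alpha^{\infty}(\X_{\Fo})$. Applying the SFT equality to each $\X^{(k)}$ and passing to the limit then concludes.

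The main obstacle will be the spectral-theoretic step in the SFT case: it requires Perron--Frobenius theory for reducible nonnegative matrices to identify $\rho(\Gamma)$ with the maximal spectral radius across SCCs, and to verify that this maximum is attained by an SCC carrying a cycle. The approximation step, while conceptually clean, also demands careful bookkeeping to justify interchanging the $k \to \infty$ and $n \to \infty$ limits, which is ultimately handled by the eventual equality $q_{\X^{(k)}}(n) = q_{\X_{\Fo}}(n)$ noted above.
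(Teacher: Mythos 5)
The paper does not give a proof of this lemma; it is cited to Rosenfeld~\cite{rosenfeld2022finding} as a known result, so there is no in-paper proof to compare against. Your argument is correct and self-contained. The easy inequality $\alpha^{\infty}\le\alpha$ follows from $p_X(n)\le q(n)$. The SFT step is sound: after a higher-block recoding with window equal to the maximal pattern length, $q(n)$ counts length-$(n-k+1)$ walks in a finite digraph $\Gamma$, while $p_X(n)$ counts such walks in the induced subgraph $\Gamma'$ on vertices that admit a bi-infinite extension (equivalently, are reachable from and can reach a cycle); since $\rho(\Gamma)$ is the maximum of the spectral radii of its strongly connected components and any SCC of positive spectral radius carries a cycle — hence lies entirely in $\Gamma'$ — one gets $\rho(\Gamma')=\rho(\Gamma)$. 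The approximation step is also fine: $q_{\X^{(k)}}(n)=q_{\X_{\Fo}}(n)$ once $k\ge n$, and compactness gives $\lang(\X_{\Fo})=\bigcap_k\lang(\X^{(k)})$, hence eventual stabilization of $p_{\X^{(k)}}(n)$ for each fixed $n$. The only place you should be explicit in a write-up is the limit interchange at the end: it is justified because Fekete's lemma presents both $\alpha$ and $\alpha^{\infty}$ as infima, so
\[
\lim_{k\to\infty}\alpha(\X^{(k)})=\inf_k\inf_n q_{\X^{(k)}}(n)^{1/n}=\inf_n\inf_k q_{\X^{(k)}}(n)^{1/n}=\inf_n q_{\X_{\Fo}}(n)^{1/n}=\alpha(\X_{\Fo}),
\]
and likewise for $\alpha^{\infty}$ (noting that $p_X$ is also submultiplicative). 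For comparison, Rosenfeld's own argument is more elementary and avoids Perron--Frobenius theory, but your SFT-approximation route buys a very transparent structural explanation: non-extendable locally admissible words live in transient parts of the transition graph and cannot contribute to exponential growth.
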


An important quantity in symbolic dynamics is the (topological) \define{entropy} of the subshift, defined as 
$$h(X) = \lim_{n\to\infty}\frac{1}{n}\log(p_{X}(n)) = \log(\alpha^{\infty}(X)).$$
For example, in the case of the full-shift on $A$, $h(A^{\Z}) = \log(|A|)$. Entropy informs many dynamical properties of the subshift, and is invariant under shift-commuting continuous bijections. Furthermore, for two subshifts $X$ and $Y$ such that $Y\subseteq X$, $h(Y)\leq h(X)$. A classical result by Lind~\cite{lind1984entropies} shows that the entropies of SFTs and sofic subshifts are exactly the set of non-negative rational multiple of Perron numbers. In contrast, the set of entropies of effective subshifts is the set of right computable real numbers~\cite{hertling2008shifts}.\\

 We say a configuration $x\in X$ is \define{periodic} if there exists $k\in\Z\setminus\{0\}$ such that $\sigma^{k}(x) = x$. We say the subshift $X$ is \define{aperiodic} if it contains no periodic configurations. If a non-empty subshift is sofic, it always contains periodic configurations.\\

 We say a subshift is \define{minimal} if it does not contain non-empty proper subshifts. Equivalently, a subshift is minimal if every orbit under the shift action is dense in the subshift. Finally, a minimal subshift with a periodic configuration is always finite.\\

 % We say a word $w\in A^*$ is \define{uniformly recurrent} in a configuration $x$ if there exists $N\geq 1$ such that $w$ is a factor of $x_{[i, i+N]}$ for every $i\in\Z$. Uniform recurrence also characterizes minimality. A subshift $X$ is minimal if and only if every $w\in\lang(X)$ is uniformly recurrent in every configuration of $x$. 

 For a comprehensive introduction to one-dimensional symbolic dynamics we refer the reader to \cite{lind2021introduction}, were proof of our assertions can be found.
%=-=-=-=-=-=-=-=-=-=-=-=-=-=-=-=--=-=-=-=-=-=-=-=-=-
\subsection{Combinatorial Group Theory}

Let $G$ be a finitely generated (f.g.) group and $S$ a finite generating set. In this article we will only consider finite \define{symmetric} generating sets, that is, generating sets that verify $S=S^{-1}$, that never contain the identity. Elements in the group are represented as words on the alphabet $S$ through the evaluation function $w\mapsto \overline{w}$. Two words $w$ and $v$ represent the same element in $G$ when $\overline{w} = \overline{v}$, and we denote this by $w =_G v$. We say a word is \define{reduced} if it contains no factor of the form $ss^{-1}$ or $s^{-1}s$ with $s\in S$. We denote the identity of a group $G$ by $1_G$.
 
 \begin{definition}
  Let $G$ be a group. We say $(S, R)$ is a \define{presentation} of $G$, denoted $G = \langle S \mid R\rangle$, if the group is isomorphic to  $\langle S \mid R\rangle = \F_S/\llangle R\rrangle$, where $\llangle R\rrangle$ is the normal closure of $R$, i.e. the smallest normal subgroup containing $R$. We say $G$ is \define{recursively presented} if there exists a presentation $(S,R)$ such that $S$ is finite and $R$ is recursively enumerable.
 \end{definition}
For a group $G$ and a generating set $S$, we define:
  $$\WP(G, S) \coloneqq \{w\in S^{*} \ | \ w=_{G} \epsilon\}.$$

 \begin{definition}
  The \define{word problem} of a group $G$ with respect to a set of generators $S$ asks to determine, given a word $w\in S^*$, if $w\in \WP(G,S)$.
 \end{definition}
 We say a word $w\in S^+$ is $G$\define{-reduced} if $w$ contains no factor in $\WP(G,S)$.  We say a word $w\in S^*$ is a \define{geodesic} if for all words $v\in S^*$ such that $\overline{w} = \overline{v}$ we have $|w|\leq |v|$. For a given group $G$ and generating set $S$, we denote its \define{language of geodesics} by $\Geo(G,S)$. The length of an element $g\in G$ with respect to $S$ is defined as $\|g\|_{S} = |w|$ where $w$ is any geodesic representing $g$. This length also defines a $G$-invariant metric $d_S:G\times G\to \N$ given by $d_S(g,h) = \|g^{-1}h\|_S$.

% \todo{Add strict growth function here}
\begin{definition}
    Let $G$ be a finitely generated group, with generating set $S$. The \define{growth function} $\gamma_{G,S}:\N\to\N$ of $G$ with respect to $S$ is defined for a given $n\in\N$ as the amount of elements of length at most $n$. In other words, $\gamma_{G,S}(n) = |\{g\in G\mid \|g\|_S\leq n\}|$.
\end{definition}

We say an element $g\in G$ has \define{torsion} if there exists $n\geq 1$ such that $g^n = 1_G$. If there is no such $n$, we say $g$ is \define{torsion-free}. Analogously, we say $G$ is a \define{torsion group} if all of its elements have torsion. Otherwise, if the only torsion element is the identity, we say the group is \define{torsion-free}.\\

The \define{free product} of two groups $G$ and $H$ given by presentations $\langle S_G\mid R_G\rangle$ and $\langle S_H \mid R_H\rangle$ respectively, is the group given by the presentation,
$$G\ast H = \langle S_G\cup S_H \mid R_G \cup R_H\rangle.$$

Finally, let $\mathcal{P}$ be a class of groups (for example abelian groups, free groups, etc). We say a group $G$ is \define{virtually} $\mathcal{P}$, if there exists a finite index subgroup $H\leq G$ that is in $\mathcal{P}$.

%%=-=-=-=-=-=-=-=-=-=-=-=-=-=-=-=-=-=-=-=-=-=-=-=-=-=-=-=-=-=-=-
\subsection{The Skeleton, Infinite Domino Snakes and Analogies between Groups and Subshifts}
\label{subsec:snake}
Let $G$ be a finitely generated group with $S$ a set of generators. We define the \define{skeleton} of $G$ with respect to $S$ as the subshift,
$$\Xs_{G,S} = \{x\in S^{\Z} \mid \forall w\factor x, \ w\notin\WP(G,S)\} = \X_{\WP(G,S)}.$$

In other words, $\Xs_{G,S}$ is the set of all bi-infinite words that do not contain factor that evaluate to the group's identity. This subshift was originally defined in the context of the \define{infinite snake problem}~\cite{aubrun2023domino}, where its properties where shown to inform the decidability of the problem. In particular, if $\Xs_{G,S}$ is sofic, then the infinite snake problem for $(G,S)$ is decidable.
\begin{example}
    Take $\Z^2$ with its standard generating set $S = \{\tt{a}^{\pm}, \tt{b}^{\pm}\}$. Its skeleton is given by 
    $$\Xs_{\Z^2, S} = \{x\in S^{\Z} \mid \forall w\factor x,\ |w|_{\tt{a}} \neq 0\ \vee\ |w|_{\tt{b}} \neq 0 \},$$
    where $|w|_s$ is the sum of exponents of the generator $s$.
\end{example}
\begin{example}
\label{ex:dihedral}
    Let $\D_{\infty}$ be the infinite dihedral group. The skeletons of this group can be radically different depending on the generating set. For instance, if we take the presentation 
    $$\D_{\infty} = \langle \tt{a}, \tt{b} \mid \tt{a}^2, \tt{b}^2\rangle,$$
    the corresponding skeleton is the finite subshift $\{(\tt{ab})^{\infty}, (\tt{ba})^{\infty}\}$. On the other hand, if we take the presentation,
    $$\D_{\infty} = \langle \tt{r}, \tt{s} \mid \tt{s}^2, \tt{srsr}\rangle,$$
    the skeleton is infinite: for every $n\in\Z$ it contains a configuration $x$ defined by $x(n)=\tt{s}$ and $x(k) = \tt{r}$ for all $k\neq n$.
\end{example}

The skeleton is also present in Rufus Bowen's notebook of problems~\cite[Problem 108]{bowen108}, where he asks what can be said about $\Xs_{G,S}$, what is its entropy and if it is intrisically ergodic. In Section~\ref{subsec:entropy} we tackle the second question. Further still, this subshift is inserted in the larger project of understanding the analogies between multidimensional subshifts and finitely generated groups. Jeandel and Vanier observed~\cite{jeandel2019characterization} that the forbidden patterns of a subshift play a similar role to relations of a group. A summary of this comparison is shown in Table~\ref{tab:dict}.
\begin{table}[h!]
\begin{center}
\begin{tabular}{ |c|c| } 
 \hline
 Group &  Subshift \\
 \hline
 Group with $n$ generators & Subshift on $n$ symbols \\ 
 Word problem $\WP(G)$ & co-language $\mathcal{L}(X)^c$\\
 Finitely presented group & SFT\\
 Recursively presented group & Effectively closed subshift\\
 Simple group & Minimal subshift\\
 $H$ is a quotient of $G$ & $Y\subseteq X$\\
 \hline
\end{tabular}
\caption{A part of the Jeandel-Vanier dictionary between groups and subshifts as introduced in~\cite{jeandel2019characterization}}
\label{tab:dict}
\end{center}
\end{table}
These comparison has been further strengthened through results such as Higman and Boone-Higman Theorems for subshifts~\cite{jeandel2019characterization}.\\

The skeleton is an attempt to establish these analogies explicitly, by using the generators as an alphabet, and $\WP(G,S)$ as the set of forbidden patterns. We will see that in this case some of these analogies hold and some do not.

%=-=-=-=-=-=-=-=-=-=-=-=-=-=-=-=-=-=-=-=-=-=-=-=-=-=-=-=-=-=-=-
\subsection{Cayley Graphs and Self-Avoiding Walks}
\label{subsec:SAWs}

Let $G$ be a finitely generated group along with a finite symmetric generating set $S$. The \define{Cayley graph} of $G$ with respect to $S$, denoted $\Gamma(G,S)$, is defined by the set of vertices $V_{\Gamma} = G$ and the set of labeled edges $E_{\Gamma} = \{(g, s, gs) \mid g\in G, s\in S\}\subseteq G\times S\times G$. Each edge $e = (g,s,h)\in E_{\Gamma}$ has an initial vertex $\init(e) = g$, a terminal vertex $\ter(e) = h$ and a label $\lambda(e) = s$. The graph is also endowed with an involution $e\mapsto e^{-1}=(h, s^{-1}, g)\in E_{\Gamma}$. If a generator has order $2$, that is, if $s\in S$ satisfies $s^2 = 1_G$, we take a unique edge between $g$ and $gs$ for every $g\in G$. Notice that every Cayley graph is $|S|$-regular, locally finite, transitive and deterministically labeled, that is, for every vertex there is a unique out-going edge for each label $S$. The group $G$ acts by translation on $\Gamma(G,S)$ by left multiplication, in other words, the action of $g\in G$ over a vertex $h\in V_{\Gamma}$ is given by $g\cdot h = gh$. Through this action, we can identify $G$ with a subgroup of the automorphism group of the Cayley graph. 

We also consider the \define{undirected Cayley graph} $\hat{\Gamma}(G,S)$, where we collapse each edge $e$ and $e^{-1}$ to a single undirected edge between $\init(e)$ to $\ter(e)$. In other words, $\hat{\Gamma}(G,S)$ is the graph with vertex set $G$ such that $g,h\in G$ are adjacent if $gh^{-1}\in S$. 

\begin{example}
\label{ex:Coxeter}
    The hexagonal grid $\mathbb{H}$ is the Cayley graph of the affine Coxeter group $\tilde{A}_2$ given by the presentation,
    $$\tilde{A}_2 = \langle \tt{a}, \tt{b}, \tt{c} \mid \tt{a}^2, \tt{b}^2, \tt{c}^2, (\tt{ab})^3, (\tt{ac})^3, (\tt{bc})^3\rangle.$$
    This can be seen in Figure~\ref{fig:honeycomb}.

\begin{figure}[!ht]
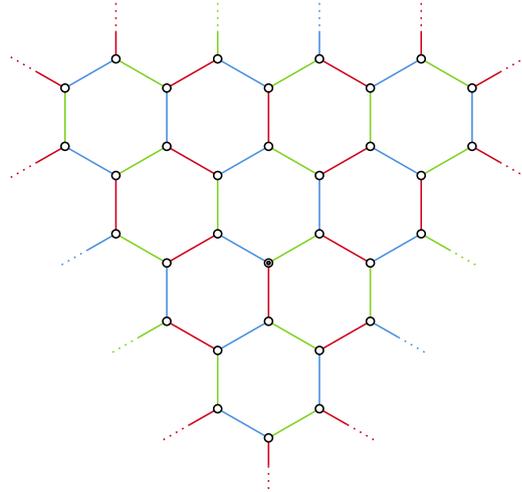

        \centering
        \includestandalone[scale=0.8]{figures/honeycomb}
        \caption{A Cayley graph of the affine Coxeter group $\tilde{A}_2$. The red edges represent $\tt{a}$, blue edges represent $\tt{b}$, and green edges $\tt{c}$.}
        \label{fig:honeycomb}
\end{figure}
\end{example}

\begin{example}
    The ladder graph $\mathbb{L}$ is the Cayley graph of $\Z\times \Z/2\Z$ with the presentation,
    $$\Z\times \Z/2\Z = \langle \tt{t}, \tt{s} \mid \tt{s}^2, \tt{tst}^{-1}\tt{s}\rangle.$$
    This can be seen in Figure~\ref{fig:ladder}.
    \begin{figure}[!ht]
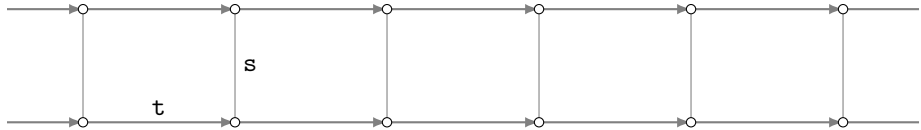

        \centering
        \includestandalone[scale=1]{figures/ladder}
        \caption{A Cayley graph of the group $\Z\times \Z/2\Z$. The generator $\tt{t}$ defines the horizontal right-pointing edges, and the generator $\tt{s}$ defines the vertical undirected edges.}
        \label{fig:ladder}
\end{figure}
This is not the only group that admits the ladder graph as a Cayley graph, this is also the case for the groups $\D_{\infty}$ and $\D_{\infty}\times \Z/2\Z$.
\end{example}

A \define{path} on $\Gamma(G,S)$ is a sequence of edges $\pi = (e_1, ..., e_n)$ such that for all $i\in\{1,...,n-1\}$ we have $\init(e_{i+1}) = \ter(e_i)$. We denote the initial vertex of the path by $\init(\pi) = \init(e_1)$ and its terminal vertex as $\ter(\pi)=\ter(e_n)$. The length of the path is given by $\ell(\pi) = n$, and its label is $\lambda(\pi) = \lambda(e_1)\ ...\ \lambda(e_n)\in S^*$. We also define the sequence of vertices visited by $\pi$ as the sequence $V(\pi) = (g_0, ..., g_n)$ with $g_i = \init(e_{i+1})$ for all $i\in\{0,..., n-1\}$ and $g_n = \ter(e_n)$. This formalism gives us a one-to-one correspondence between paths starting at $1_G$ and words in $S^*$. In particular, a path $\pi$ satisfies $\init(\pi) = \ter(\pi)$ if and only if $\lambda(\pi)\in\WP(G,S)$.\\

A path $\pi$ is a \define{self-avoiding walk} (SAW) if it never visits the same vertex twice. We define the language of self-avoiding walks over $\Gamma(G,S)$ as the set 
$$L_{SAW}(G,S) = \{\lambda(\pi) \mid \pi \text{ is a SAW with } \init(\pi)=1_G\}.$$
Remark that the language remains the same if we change the initial vertex from the identity to any other group element because the graph is transitive. Furthermore, because Cayley graphs are deterministically labeled, no two SAWs share the same label. A \define{bi-infinite SAW} centered at $g\in G$ is a sequence of edges $\pi=(e_i)_{i\in\Z}\in E^{\Z}$ such that $\init(e_{i+1}) = \ter(e_i)$, and $g = \init(e_0)$ such that $\pi$ never visits the same vertex twice. We can thus state the following.

\begin{lemma}
\label{lem:LSAW}
    Let $G$ be a group and $S$ a generating set. Then,
    \begin{align*}
        \Xs_{G,S} &= \{\lambda(\pi)\in S^{\Z} \mid \pi \text{ is a bi-infinite SAW centered at } 1_G\}, \\
        &= \X_{L_{SAW}(G,S)^c}.
    \end{align*}
    Moreover, $\lang_{loc}(\Xs_{G,S}) = L_{SAW}(G,S)$.
\end{lemma}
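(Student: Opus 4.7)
The plan is to exploit the natural bijection, guaranteed by the deterministic labeling of the Cayley graph, between bi-infinite words $x\in S^{\Z}$ and bi-infinite paths $\pi$ with $\init(e_0) = 1_G$. Writing $(g_n)_{n\in\Z}$ for the sequence of vertices visited by $\pi$, one has $g_i^{-1}g_j = \overline{x_{[i+1,j]}}$ for all $i<j$. Consequently, $g_i=g_j$ holds \emph{exactly} when the factor $x_{[i+1,j]}$ lies in $\WP(G,S)$. This single observation, applied either to bi-infinite or to finite paths, drives all three identities.

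For the first equality, the path $\pi$ centered at $1_G$ is self-avoiding iff $g_i\neq g_j$ for all $i<j$, which by the above is equivalent to $\lambda(\pi)$ containing no factor in $\WP(G,S)$, i.e.\ to $\lambda(\pi)\in\Xs_{G,S}$. For the ``moreover'' clause $\lang_{loc}(\Xs_{G,S})=L_{SAW}(G,S)$, I would apply the same correspondence to \emph{finite} paths: a word $w\in S^*$ labels a SAW starting at $1_G$ iff $w$ contains no factor in $\WP(G,S)$, which is precisely the definition of local admissibility for $\Xs_{G,S}=\X_{\WP(G,S)}$. Finally, the second equality $\Xs_{G,S}=\X_{L_{SAW}(G,S)^c}$ follows by combining these two statements: a configuration $x\in S^{\Z}$ belongs to $\X_{L_{SAW}(G,S)^c}$ iff every finite factor of $x$ lies in $L_{SAW}(G,S)$, iff no such factor contains a sub-factor in $\WP(G,S)$, iff $x\in\Xs_{G,S}$.

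No serious obstacle is anticipated: the lemma is essentially a translation dictionary between SAWs on $\Gamma(G,S)$ and bi-infinite words avoiding $\WP(G,S)$. The only point requiring mild care is to restrict attention to non-empty factors throughout, since the empty word trivially satisfies $\varepsilon =_G 1_G$ but does not witness any self-intersection of the path.
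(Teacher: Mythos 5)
Your proposal is correct and follows the natural approach. The paper's own proof consists of the single sentence ``This is a direct consequence of the definitions,'' so your argument simply supplies the details that the authors chose to omit: the deterministic-labeling bijection between words $x\in S^{\Z}$ (resp.\ $w\in S^*$) and paths rooted at $1_G$, the observation that two visited vertices $g_i,g_j$ coincide precisely when the intervening factor of $x$ lies in $\WP(G,S)$, and the resulting chain of equivalences for each of the three claimed identities. Your parenthetical caution about excluding the empty word from $\WP(G,S)$ when taking it as a forbidden set is a genuine (if minor) point that the paper glosses over. The indexing convention you use ($g_i^{-1}g_j = \overline{x_{[i+1,j]}}$ rather than $\overline{x_{[i,j-1]}}$) differs slightly from the one set up in the paper's definition of $V(\pi)$, but this is immaterial.
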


\begin{proof}
    This is a direct consequence of the definitions. %, since $L_{SAW}(G,S)=S^*\setminus\WP(G,S)$.
\end{proof}

Once again, as the graph is transitive, we can change the center for any other element of the group. 
Notice that any finite subwalk of a bi-infinite SAW is a SAW. The converse is not always true, that is, there are SAWs that do not appear in any bi-infinite SAW (see Figure \ref{fig:finite_SAW_Z2_non_infinite}).

\begin{figure}[!ht]
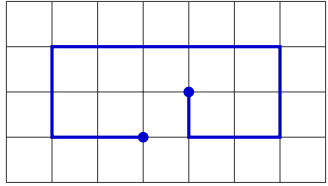

    \centering
    \include{figures/finite_SAW_Z2_non_infinite}
    \vspace{-0.3cm}
    \caption{On $\Z^2$ with standard generating set% $\left\{ (1,0),(0,1)\right\}$
    : a finite SAW  that does not appear in a bi-infinite SAW.}
    \label{fig:finite_SAW_Z2_non_infinite}
\end{figure}

\begin{remark}
    For the undirected Cayley graph $\hat{\Gamma}(G,S)$ we can define a labelling function $\lambda'$ from the set of self-avoiding walks over $\hat{\Gamma}(G,S)$ to $S^*$. Take a self-avoiding walk $\pi$ that passes through the sequence of vertices $(g_0, g_1, ..., g_n)$, its label $\lambda'(\pi) = s_0\ ...\ s_{n-1}$ is such that $s_i = g_i^{-1}g_{i+1}$. This way, both $L_{\SAW}(G,S)$ and $\Xs_{G,S}$ are equal to the corresponding labels of self-avoiding walks on the \emph{undirected} Cayley graph. Because of this, in what follows we do not distinguish between the directed and undirected Cayley graphs. 
\end{remark}

%%=-=-=-=-=-=-=-=-=-=-=-=-=-=-=-=-=-=-=-=-=-=-=-=-=-=-=-=-=-=-=-
\subsection{Connective Constants}

Let $c_n$ be the number of self-avoiding walks of length $n$ in the Cayley graph $\Gamma(G,S)$ starting at the identity. This sequence is submultiplicative, i.e. $c_{n+m}\leq c_n c_m$ for all $m,n\in\N$, so that by Fekete's Lemma, the limit of $\sqrt[n]{c_n}$ exists:
$$\mu(G,S) = \lim_{n\to\infty}\sqrt[n]{c_{n}} = \inf_{n\in\N}\sqrt[n]{c_n}\in[1,\infty).$$
This limit is known as the \define{connective constant} of the Cayley graph. For general quasi-transitive graphs, this limit was proved to be independent of the starting vertex by Hammersly and Morton~\cite{hammersley1954poor}. 

In general, connective constants are hard to compute. Nevertheless, the exact value of some connective constants is known. For instance, for the hexagonal grid (which as we saw, is a Cayley graph for $\tilde{A}_2$) its value is $\sqrt{2+\sqrt{2}}$~\cite{duminil2012connective}, for the bi-infinite ladder (as in Figure~\ref{fig:ladder}) it is the golden mean $\frac12(1+\sqrt{5})$~\cite{alm1990random}, and for some Cayley graphs of free products of finite groups it is the zero of a polynomial~\cite{gilch2017counting}. On the other hand, giving a closed form for the connective constant of $\Z^2$ with standard generators is still an open problem. The best estimate as of writing is
$$\mu(\Z^2) \approx 2.63815853032790(3),$$
obtained by Jacobsen, Scullard, and Guttman~\cite{jacobsen2016constant}. 

There are, however, bounds on the connective constant. We translate the following results from Grimmet and Li -- which were stated for larger classes of graphs -- to our Cayley graph context.
\begin{theorem}[\cite{grimmett2014strict, grimmett2015bounds}]
\label{thm:bounds_GL}
    Let $G$ be an infinite finitely generated group and $S$ a generating set.
    \begin{itemize}
        \item $\mu(G,S)\geq \sqrt{|S| - 1}$,
        \item For $w\neq_G 1_G$ and $N$ its normal closure in $G$, $\mu(G/N, S)<\mu(G,S)$,
        \item For $g\notin S$ a non-identity element of $G$ and $S' = S\cup\{g^{\pm}\}$, $\mu(G,S)<\mu(G,S')$.
    \end{itemize}
\end{theorem}

For more bounds and details, see \cite{grimmet2017self}.

%%=-=-=-=-=-=-=-=-=-=-=-=-=-=-=-=-=-=-=-=-=-=-=-=-=-=-=-=-=-=-=-

\section{General Properties}
\label{section.general_properties}

Let us begin by establishing properties of the skeleton that are common for all groups and generating sets. 

\subsection{Bi-infinite SAWs through Group Elements and Computability}

A first observation is that $\Xs_{G,S} = \varnothing$ for a generating set if and only if $G$ is a finite group; this is a consequence of Konig's Lemma (see~\cite{watkins1986infinite}). As we only consider infinite finitely generated groups, unless explicitly stated, the skeletons are never empty.  Next, if $\pi = (e_i)_{i\in\Z}$ is a bi-infinite SAW on the Cayley graph $\Gamma(G,S)$, its inverse $\pi^{-1} = (e_i^{-1})_{i\in\Z}$ is also a bi-infinite SAW. Therefore, for each configuration $x\in \Xs_{G,S}$, its inverse configuration $x^{-1}(k) = x(k)^{-1}$ belongs to $\Xs_{G,S}$.\\

Next, as shown in Figure~\ref{fig:finite_SAW_Z2_non_infinite}, the word $w = a^{-2}b^{2}a^5b^{-2}a^{-2}b$ in $\Z^2$ avoids factors from the word problem but does not define a configuration. Nevertheless, the word $ab$ defines the same group element, and can be extended to a complete configuration. Is this always the case? Is it true that for every non-trivial group element there is a word evaluating to it in $\mathcal{L}(\Xs_{G,S})$? For every finitely generated group $G$ and every group element $g\in G$, one can find a generating set $S$ such that this is true (Proposition~\ref{prop:non_trivial_in_one_skeleton}). 

\begin{proposition}\label{prop:non_trivial_in_one_skeleton}
Let $G$ be a finitely generated group. Then there exists $S$ a generating set for $G$ such that for every non trivial group element $g\in G$, there exists a word $w\in S^*$ such that $\overline{w} = g$ and $w\in\mathcal{L}(\Xs_{G,S})$.
\end{proposition}

\begin{proof}
    A theorem by Seward~\cite[Theorem 1.8]{Seward_2014} states that for every finitely generated group $G$, there exists a finite generating set $S$ such that the Cayley graph $\Gamma(G,S)$ has a regular spanning tree. In particular, this tree has no leaves, thus each path leading to a vertex can be continued (see Figure~\ref{fig:regular_spanning_tree}). Consider such an $S$ and one associated regular spanning tree of $\Gamma(G,S)$. Take a non trivial element $g\in G$ and consider the path connecting the identity $1_G$ to $g$ in the regular spanning tree. Then this finite simple path can be extended to an infinite simple path inside the spanning tree, leading to an infinite simple path going through $1_G$ and~$g$. Translating this path into a bi-infinite sequence of elements in $S$ give a configuration of the skeleton $\Xs_{G,S}$.

\begin{figure}[!ht]
        \centering
        \begin{tikzpicture}[scale=0.25]

\draw[black!50] (0,0) grid (15,15);
\draw[very thick, color = bleu] (0,0) -- (1,0) -- (1,1) -- (0,1) -- (0,3) -- (1,3) -- (1,2) -- (2,2) -- (2,3) -- (3,3) -- (3,1) -- (2,1) -- (2,0) -- (4,0) -- (4,1) -- (5,1) -- (5,0) -- (7,0) -- (7,1) -- (6,1) -- (6,2) -- (7,2) -- (7,3) -- (5,3) -- (5,2) -- (4,2) -- (4,5) -- (5,5) -- (5,4) -- (7,4) -- (7,5) -- (6,5) -- (6,6) -- (7,6) -- (7,7) -- (5,7) -- (5,6) -- (4,6) -- (4,7) -- (2,7) -- (2,6) -- (3,6) -- (3,4) -- (2,4) -- (2,5) -- (1,5) -- (1,4) -- (0,4) -- (0,6) -- (1,6) -- (1,7) -- (0,7);
\begin{scope}[shift={(7,8)},rotate=90]
\draw[very thick, color = bleu] (0,0) -- (1,0) -- (1,1) -- (0,1) -- (0,3) -- (1,3) -- (1,2) -- (2,2) -- (2,3) -- (3,3) -- (3,1) -- (2,1) -- (2,0) -- (4,0) -- (4,1) -- (5,1) -- (5,0) -- (7,0) -- (7,1) -- (6,1) -- (6,2) -- (7,2) -- (7,3) -- (5,3) -- (5,2) -- (4,2) -- (4,5) -- (5,5) -- (5,4) -- (7,4) -- (7,5) -- (6,5) -- (6,6) -- (7,6) -- (7,7) -- (5,7) -- (5,6) -- (4,6) -- (4,7) -- (2,7) -- (2,6) -- (3,6) -- (3,4) -- (2,4) -- (2,5) -- (1,5) -- (1,4) -- (0,4) -- (0,6) -- (1,6) -- (1,7) -- (0,7);
\end{scope}
\begin{scope}[shift={(15,7)},rotate=180]
\draw[very thick, color = bleu] (0,0) -- (1,0) -- (1,1) -- (0,1) -- (0,3) -- (1,3) -- (1,2) -- (2,2) -- (2,3) -- (3,3) -- (3,1) -- (2,1) -- (2,0) -- (4,0) -- (4,1) -- (5,1) -- (5,0) -- (7,0) -- (7,1) -- (6,1) -- (6,2) -- (7,2) -- (7,3) -- (5,3) -- (5,2) -- (4,2) -- (4,5) -- (5,5) -- (5,4) -- (7,4) -- (7,5) -- (6,5) -- (6,6) -- (7,6) -- (7,7) -- (5,7) -- (5,6) -- (4,6) -- (4,7) -- (2,7) -- (2,6) -- (3,6) -- (3,4) -- (2,4) -- (2,5) -- (1,5) -- (1,4) -- (0,4) -- (0,6) -- (1,6) -- (1,7) -- (0,7);
\end{scope}
\begin{scope}[shift={(15,8)},rotate=90]
\draw[very thick, color = bleu] (0,0) -- (1,0) -- (1,1) -- (0,1) -- (0,3) -- (1,3) -- (1,2) -- (2,2) -- (2,3) -- (3,3) -- (3,1) -- (2,1) -- (2,0) -- (4,0) -- (4,1) -- (5,1) -- (5,0) -- (7,0) -- (7,1) -- (6,1) -- (6,2) -- (7,2) -- (7,3) -- (5,3) -- (5,2) -- (4,2) -- (4,5) -- (5,5) -- (5,4) -- (7,4) -- (7,5) -- (6,5) -- (6,6) -- (7,6) -- (7,7) -- (5,7) -- (5,6) -- (4,6) -- (4,7) -- (2,7) -- (2,6) -- (3,6) -- (3,4) -- (2,4) -- (2,5) -- (1,5) -- (1,4) -- (0,4) -- (0,6) -- (1,6) -- (1,7) -- (0,7);
\end{scope}
\draw[thick, color = bleu] (0,7) -- (0,8);
\draw[thick, color = bleu] (7,8) -- (8,8);
\draw[thick, color = bleu] (15,7) -- (15,8);

\filldraw[color = vert] (6,8) circle (4pt);
\draw[line width = 2pt, color = vert] (6,8) -- (6,9) -- (7,9) -- (7,8) -- (8,8);
\begin{scope}[shift={(15,8)},rotate=90]
\draw[line width = 2pt, color = vert] (2,7) -- (2,6) -- (3,6) -- (3,4) -- (2,4) -- (2,5) -- (1,5) -- (1,4) -- (0,4) -- (0,6) -- (1,6) -- (1,7) -- (0,7);
\filldraw[color = vert] (2,7) circle (4pt);
\end{scope}

\end{tikzpicture}
        \caption{In blue, an example of regular spanning tree of degree $2$ -- thus a bi-infinite Hamiltonian path -- for $\Z^2$ with its standard presentation. In green a path from $(0,0)$ to $(2,2)$ extracted from this regular spanning tree. Note that this path is highly non-geodesic.}
        \label{fig:regular_spanning_tree}
\end{figure}
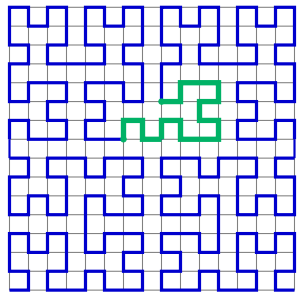
\end{proof}

This can be made more precise in the case of one and two ended groups. Cayley graph $\Gamma = \Gamma(G,S)$ has $k$ ends if $k$ is the supremum of the number of infinite connected components of the induced subgraph $\Gamma[V_{\Gamma}\setminus A]$ over every finite subset $A\subseteq V$ (see Section~\ref{subsec:EndsAndAuts}).

\begin{proposition}
    Let $G$ be a finitely generated group with one or two ends, with $S$ a generating set. Define $S' = \{g\in G \mid \|g\|_S\leq 3\}$. Then, for every non trivial group element $g\in G$, there exists a word $w\in (S')^*$ such that $\overline{w} = g$ and $w\in\mathcal{L}(\Xs_{G,S'})$.
\end{proposition}

\begin{proof}
    In~\cite[Theorem 1.3]{carrasco2023geometric} Carrasco-Vargas showed that for one and two ended groups, Sewards' Theorem holds for $S'$, that is, there is a Hamiltonian path on the Cayley graph $\Gamma(G,S')$. This directly implies our statement.
\end{proof}

By translating and joining Lemmas 4.4 and 3.7 from~\cite{carrasco2023geometric}, we can state results on the decidability of the language of the skeleton for particular sets of generators.
\begin{proposition}
    Let $G$ be a finitely generated group with one or two ends, with $S$ a generating set. Suppose $G$ has decidable word problem, and define $S' = \{g\in G \mid \|g\|_S\leq 3\}$. Then, $\lang(\Xs_{G,S'})$ is computable. 
\end{proposition}

For a skeleton, having a computable language means that there is an algorithm that determines if a finite SAW is bi-infinitely extendable. A particular class of subshifts that have computable language are sofic subshifts. In Sections~\ref{section.forbid_periodic_computational} and~\ref{section.sofic_skeleton} we will explore when skeletons belong to this class.

% %=-=-=-=-=-=-=-=-=-=-=-=-=-=-=-=-=-=-=-=-=-=-=-=-=-=-=-=-=-=-=-=-=-=-=-=-=- 

% \todo{what follows is now useless}
% \begin{lemma}\label{lem:skeleton_group_morphism}
% Let $G,H$ be two f.g. groups. Assume there exist $S,T$ generating sets for $G$ and $H$ and a bijection $\varphi:T\to S$ such that $\Xs_{G,S}=\varphi\left(X_{H,S}\right)$. Then for all $u,v\in \mathcal{L}(\Xs_{G,S})$ if $u\equiv_G v$ then $\varphi(u)\equiv_G \varphi(v)$.    
% \end{lemma}

% \begin{proof}
% Take $u,v\in \mathcal{L}(\Xs_{G,S})$ such that $u\equiv_G v$. First suppose that the words $u$ and $v$ represent paths starting from the identity $1_G$ in the Cayley graph that are disjoint except on their extremities $1_G$ and $g$. By definition of the language $\mathcal{L}(\Xs_{G,S})$ there exist two configurations $x,y\in \Xs_{G,S}$ such that $u\factor x$ and $v\factor y$. Up to a shift on $x$ and $y$, we can moreover assume that $u=x_{[1;|u|]}$ and $v=y_{[1;|v|]}$. Then $\varphi(u)\factor \varphi(x)$ and $\varphi(v)\factor \varphi(y)$.

% Assume that $\varphi(u)\not\equiv_H \varphi(v)$, said otherwise $\varphi(u)\cdot\varphi(v)^{-1}\not\equiv_H 1_H$.
% \todo{is it true that $w\not\equiv_H 1_H$ and $w$ reduced implies that $w\in\mathcal{L}(\Xs_{H,T})$ ?????}

% If $u$ and $v$ have more in common than their extremities, we split $u$ and $v$ into $u_1\dots u_n$ and $v_1\dots v_n$ such that each couple $(u_i,v_i)$ has this property.\todo{complete this case}

% \end{proof}

%=-=-=-=-=-=-=-=-=-=-=-=-=-=-=-=-=-=-=-=-=-=-=-=-=-=-=-=-=-=-=-=-=-=-=-=-=- 
\subsection{Entropy}
\label{subsec:entropy}

As seen in Lemma~\ref{lem:LSAW}, the skeleton shift is the set of labels of bi-infinite SAWs over a Cayley graph. Consequently, its complexity function counts the number of infinitely bi-extendable SAWs of length $n$, with $\alpha^{\infty}(\Xs_{G,S})$ being their asymptotic growth rate. Furthermore, the number of locally admissible words of length $n$ is exactly $c_n$, the number of finite SAWs of length $n$. Therefore, by Lemma~\ref{lem:global_local} we obtain the following.

\begin{lemma}
    For a finitely generated group $G$ and a generating set $S$, $h(\Xs_{G,S}) = \log(\mu(G,S))$.
\end{lemma}

This equality can also be deduced from \cite{grimmett2014extendable}, where Grimmet et al. show that the connective constant is equal to the growth rate of infinitely bi-extendable SAWs.\\

Let $(G,S)$ be a f.g. group and $\gamma_{G,S}:\N\to\N$ its growth function. We define its asymptotic growth rate as the value,
$$\mathfrak{H}_{G,S}= \lim_{n\to\infty}\frac{1}{n}\log(\gamma_{G,S}(n)).$$
As the growth function is sub-multiplicative (see \cite{ceccherini2010}), $\mathfrak{H}_{G,S}$ exists by Fekete's Lemma.

\begin{remark}    
\label{rem:strict_vs_normal}
    An alternative way to look at the growth of a group is the \define{strict growth function} $\sigma_{G,S}$, where $\sigma_{G,S}(n)$ is the number of elements of length \emph{exactly} $n$. As is the case for the growth function, $\sigma_{G,S}$ is sub-multiplicative. Its asymptotic growth rate is the same as that of $\gamma_{G,S}$, namely $\mathfrak{H}_{G,S}$. This can be seen through their generating functions. Take $F,f:\C\to\C$ defined as 
    $$F(z) = \sum_{n\in\N}\gamma_{G,S}(n)z^n, \ f(z) = \sum_{n\in\N}\sigma_{G,S}(n)z^n.$$
    By the Cauchy-Hadamard theorem we know that the asymptotic growth rate of $\gamma_{G,S}$ (resp. $\sigma_{G,S}$) is the reciprocal of the radius of convergence of $F$ (resp. $f$). Because strict growth can be expressed as $\sigma_{G,S}(n) = \gamma_{G,S}(n) - \gamma_{G,S}(n-1)$, with the convention that $\gamma_{G,S}(-1) = 0$, we get that $f(x) = (1-x)F(x)$. As the term $(1-x)$ does not change the radius of convergence of the series, we have that
    $$\lim_{n\to\infty}\frac{\log(\sigma_{G,S}(n))}{n} = \mathfrak{H}_{G,S}.$$ 
\end{remark}

\begin{proposition}
\label{prop:entropy}
Let $G$ be a f.g. group with $S$ a finite generating set. Then, 
$$\mathfrak{H}_{G,S} \leq h(\Xs_{G,S})\leq \log(|S|-1).$$
\end{proposition}

\begin{proof}
    Let $p(n)$ be the complexity function for $\Xs_{G,S}$ and $k = |S|$. For the upper bound, notice that the total number of reduced words of length $n$ over $S$ is exactly the number of elements of length $n$ in free group $\F_{m}$, with $m = \lceil\frac{k}{2}\rceil$. Therefore,
    $$p(n) \leq \gamma_{\F_{m}}(n) - \gamma_{\F_{m}}(n-1) = k(k - 1)^{n-1}.$$

    On the other hand, every element of length $n$ has a geodesic representative of length $n$, which by definition is $G$-reduced. In particular, this representative is a SAW of length $n$. Thus, $\sigma_{G,S}(n)\leq c_n$ and 
    $$\mathfrak{H}_{G,S} = \lim_{n\to\infty}\frac{\log(\sigma_{G,S}(n))}{n}\leq\log(\mu(G,S)).$$
\end{proof}

\begin{remark}
     The bounds from Proposition \ref{prop:entropy} are tight in general, as free groups with free generating sets satisfy $\mathfrak{H}_{\F_m,S} = h(\Xs_{\F_m,S}) = \log(2m-1)$. Nevertheless, by Theorem~\ref{thm:bounds_GL} we know that for non-free groups $h(\Xs_{G,S}) < \log(2m-1)$ for all generating sets. This same theorem also tells us that for groups with polynomial growth the lower bound is strict, as
     $$0 = \mathfrak{H}_{G,S} < \frac12\log(|S|-1)\leq h(\Xs_{G,S}).$$
\end{remark}

Another straightforward bound we find from algebraic considerations is the following.
\begin{proposition}
\label{prop:entropia_grande}
    Take $G$ a finitely generated group and $S$ generating set. If $\{s_1, ..., s_n\}\subseteq S$ is a subset of generators such that there induced semigroup $\langle s_1, ..., s_n\rangle_+$ does not contain the identity, then $h(\Xs_{G,S})\geq\log(n)$.
\end{proposition}

\begin{proof}
    If $\langle s_1, ..., s_n\rangle_+$ does not contain the identity, any combination of these generators will give a word that does not contain factors that evaluate to the identity. In other words, the skeleton contains the full-shift $\{s_1, ..., s_n\}^{\Z}$. Consequently, $h(\Xs_{G,S_n})\geq\log(n)$. 
\end{proof}

\begin{example}
    Take $\Z^2$ with its standard generating set $\{\tt{a}^\pm, \tt{b}^\pm\}$. Then, the semigroup generated by $\tt{a}$ and $\tt{b}$ does not contain the identity. Then, $h(\Xs_{\Z^2,\{\tt{a}^\pm, \tt{b}^\pm\}})\geq \log(2)$. Similarly, if we take the discrete Heisenberg group $H_3$ with generating set $\{\tt{a}^\pm, \tt{b}^\pm, \tt{c}^\pm\}$ through the presentation,
    $$H_3 = \langle \tt{a}, \tt{b}, \tt{c} \mid [\tt{a}, \tt{c}], [\tt{b}, \tt{c}], [\tt{a}, \tt{b}]\tt{c}^{-1}\rangle,$$
    the semigroup given by the three generators $\tt{a}, \tt{b}$ and $\tt{c}$ does not contain the identity. Then, by the previous proposition $h(\Xs_{H_3,\{\tt{a}^\pm, \tt{b}^\pm, \tt{c}^\pm\} })\geq \log(3)$.
\end{example}

\begin{remark}
    Given a group $G$, the entropy of its skeleton can be made arbitrarily large. This can be done be taking bigger and bigger generating sets and using the lower bound $\sqrt{|S|-1}$ given by Theorem~\ref{thm:bounds_GL}. This can also be done in torsion-free groups by taking a torsion-free element $g\in G$, a generating set containing $\{g, g^2, ..., g^n\}$ and using the previous proposition.
\end{remark}

In Section~\ref{section.entropy_CC} we will see methods to approximate entropy and connective constants for different classes of groups.

%=-=-=-=-=-=-=-=-=-=-=-=-=-=-=-=-=-=-=-=-=-=-=-=-=-=-=-=-=-=-=-=-=-=-=-=-=-
\section{Dynamic and Computational Aspects}
\label{section.forbid_periodic_computational}

The goal of this section is to explore the multiple dynamical and computational properties of skeletons, and how they interact with the algebraic properties of the underlying group. We look at groups that admits SFT, sofic, effective or minimal skeletons, as well as their periodic points.\\

A subshift can be defined by various different sets of forbidden patterns. We saw, from its definition and Lemma~\ref{lem:LSAW}, that $\Xs_{G,S}$ is defined by at least two different sets, namely $L_{SAW}(G,S)$ and $\WP(G,S)$. We begin by describing an additional set that will help us better understand the structure of forbidden patterns.\\

We begin by looking at the set of patterns that define simple cycles (also called embedded cycles) in the Cayley graph. We define the set of labels of simple cycles of a group $G$ with respect to a finite generating set $S$ as,
\begin{align*}
    \Ou_{G,S} &= \{w\in \WP(G,S) \mid w \text{ defines a simple cycle in } \Gamma(G,S)\}\\
    &= \{w\in \WP(G,S) \mid \forall w'\sqsubset w,\  w'\notin\WP(G,S)\}.
\end{align*}

\begin{example}
    Consider $\Z^2$ with its standard presentation $\langle \tt{a},\tt{b} \mid [\tt{a},\tt{b}]\rangle$. Then the word $\tt{aba}^{-3}\tt{b}^{-1}\tt{abab}^{-1}$ is in $\WP(\Z^2,\{\tt{a},\tt{b}\})$ but not in $\Ou_{\Z^2,\{\tt{a},\tt{b}\}}$ since there are repeated vertices in the path it represents in $\Gamma(\Z^2,\{\tt{a},\tt{b}\})$. 
    \begin{center}
    \begin{tikzpicture}[scale=0.75]

    \tikzset{
  % style to add an arrow in the middle of a path
  mid arrow/.style={postaction={decorate,decoration={
        markings,
        mark=at position .75 with {\arrow[#1]{stealth}}
      }}},
}

    %\draw[black!75] (-3,-1) grid (2,2);
    
    \draw[very thick,bleu,decorate,postaction={mid arrow=bleu}](0,0) -- (1,0);
    \draw[very thick,bleu,decorate,postaction={mid arrow=bleu}](1,0) -- (1,1.05);
    \draw[very thick,bleu,decorate,postaction={mid arrow=bleu}](1,1.05) -- (0,1.05);
    \draw[very thick,bleu,decorate,postaction={mid arrow=bleu}](0,1.05) -- (-1,1.05);
    \draw[very thick,bleu,decorate,postaction={mid arrow=bleu}](-1,1.05) -- (-2,1.05);
    \draw[very thick,bleu,decorate,postaction={mid arrow=bleu}](-2,1.05) -- (-2,0);
    \draw[very thick,bleu,decorate,postaction={mid arrow=bleu}](-2,0) -- (-1,0);
    \draw[very thick,bleu,decorate,postaction={mid arrow=bleu}](-1,0) -- (-1,0.95);
    \draw[very thick,bleu,decorate,postaction={mid arrow=bleu}](-1,0.95) -- (0,0.95);
    \draw[very thick,bleu,decorate,postaction={mid arrow=bleu}](0,0.95) -- (0,0);
    \filldraw[color = bleu] (0,0) circle (3pt);
    
    \end{tikzpicture}    
    \end{center}
\end{example}

We call elements of $\Ou_{G,S}$ \define{self-avoiding polygons} (SAPs) of the Cayley graph of $\Gamma(G,S)$.

\begin{lemma}
\label{lem:ouro}
    Let $\Fo = \Ou_{G,S}\cup\{ss^{-1}\mid s\in S\}$. Then, $\Xs_{G,S} = \X_{\Fo}$.
\end{lemma}

\begin{proof}
    Since $\Ou_{G,S}\subseteq\WP(G,S)$ and $\{ss^{-1}\mid s\in S\}\subseteq\WP(G,S)$, we have that $\Fo\subseteq\WP(G,S)$. So, the subshifts defined by two sets respect the reciprocal inclusion, and we have $\Xs_{G,S} \subseteq \X_{\Fo}$.

    Reciprocally, take some configuration $x\in \X_{\Fo}$ and assume it contains some pattern $w\in \WP(G,S)$. Without loss of generality we assume that $w=s_1\dots s_n$ for some $n\in\N$, so that $\overline{s_1\dots s_n} = 1_G$. Consider the group elements $g_i$ defined by $g_i = \overline{s_1\dots s_i}$ for $i\in\{1\dots n\}$ and $g_0=1_G$. Since $x\in \X_{\Fo}$ the pattern $w$ does not belong to $\Ou_{G,S}\cup\{ss^{-1}\mid s\in S\}$. So necessarily $n>1$ and there are some repetitions among the $g_i$'s in addition to $g_0=g_n$. Take two indices $i,j$ such that $i<j$ and $\{i,j\}\neq\{0\},\{n\},\{0,n\}$ (at least one of the two indices is neither $0$ nor $n$) and $i,j$ are minimal. Then the word $s_i\dots s_j$ defines a cycle in $\Gamma(G,S)$, which contradicts our original assumption. Finally, $x\in\Xs_{G,S}$, which concludes the proof.
\end{proof}

This alternative set of forbidden patterns for $\Xs_{G,S}$ will be particularly helpful in the proof of Theorem~\ref{thm:SFT}, where we characterize groups $G$ which admit a generating set $S$ such that $\Xs_{G,S}$ is an SFT and also in Section~\ref{subsection.lower_bounds_SAP}.

%=-=-=-=-=-=-=-=-=-=-=-=-
\subsection{SFT Skeletons}

To find SFTs, we start with a warm-up lemma that contains the central idea used in our classification of groups that admit skeleton SFTs.

\begin{lemma}
    $\Xs_{\Z^d, S}$ is not an SFT for $d\geq 2$ and any generating set $S$.
\end{lemma}

\begin{proof}
    Let $S$ be a generating set for $\Z^d$ and suppose $\Fo'$ is a finite set of forbidden patterns such that $\Xs_{\Z^d,S} = \X_{\Fo'}$. Then, as $S$ generates the group, there must exist $s_1, s_2\in S$ such that $\langle s_1\rangle\cap\langle s_2\rangle = \{1_{\Z^d}\}$, and $\langle s_1, s_2\rangle\simeq\Z^2$. Let us denote $N =\max_{w\in\Fo'}|w|$. Take the SAP defined by the square of length $2N$ on the first two generators $w = s_1^{2N}s_2^{2N}s_1^{-2N}s_2^{-2N}$. Notice that no factors of $w^2$ of length $N$ belong to $\Fo'$ as they are all globally admissible in $\Xs_{\Z^d,S}$. Let $x =w^{\infty}$. Clearly $x\notin \Xs_{\Z^d,S}$, as it contains $w$ which satisfies $\overline{w}=1_G$. Nevertheless, no factor of $x$ of length $N$ is contained in $\Fo'$. Therefore $x\in \X_{\Fo'}$, which is a contradiction.
\end{proof}

The main idea of this lemma is using arbitrarily large cycles that are locally self-avoiding. This way, it is not possible to detect that the path eventually crosses itself using a finite window. Which groups admit generating sets that define SFT skeletons then? Let us show that this is the case of a specific class of virtually free groups.

\begin{definition}
\label{def:plain}
    A group $G$ is \define{plain} if there exist finite groups $\{G_i\}_{i=1}^k$ and $m\geq 1$ such that $G$ is isomorphic to the free product
    $$\left(\Conv_{i=1}^k G_i\right) \ast \mathbb{F}_m.$$ 
We say a finite generating set $S$ for such $G$ is \define{standard} if it can be written as the disjoint union $S = S_1 \cup ... \cup S_k \cup S_{k+1}$ where $S_i$ is a generating set for $G_i$ and $S_{k+1}$ is a free generating set for $\F_m$.
\end{definition}

\begin{theorem}[Theorem~\ref{intro:SFT}]
\label{thm:SFT}
    Let $G$ be a finitely generated group. Then, there exists a finite generating set $S$ such that $\Xs_{G,S}$ is an SFT if and only if $G$ is a plain group.
\end{theorem}

In order to prove this theorem we use a characterization of plain groups with respect to their simple cycles. The \define{diameter} of a simple cycle is the greatest distance between to vertices in the cycle. A vertex $v$ in a graph $\Gamma$ is said to be a \define{cut vertex} if $\Gamma\setminus\{v\}$ is disconnected. A graph is said to be \define{2-connected} if it contains no cut vertices. A maximal 2-connected subgraph is called a \define{block}.

\begin{theorem}[\cite{haring1983groups}]
\label{thm:HS}
Let $G$ be a group and $m\in\N$. Then, the following are equivalent
\begin{itemize}
    \item $G$ admits a finite generating set $S$ such that all simple cycles in the undirected Cayley graph $\Gamma(G,S)$ have diameter at most $m$,
    \item $G$ admits a finite generating set $S$ such that all blocks in the undirected Cayley graph $\Gamma(G,S)$ have diamater at most $m$,
    \item $G$ is a plain group.
\end{itemize}
\end{theorem}

Proofs of this Theorem can be found in \cite{haring1983groups,elder2022rewriting}.

\begin{proof}[Proof of Theorem \ref{thm:SFT}]
    Let $G$ be a plain group decomposed as $\left(\Conv_{i=1}^k G_i\right) \ast \F_m$ with $S = S_1 \cup ... \cup S_{k+1}$ a standard generating set. Due to its free product structure, any word $w\in S^*$ can be uniquely decomposed as $w = w_{1} w_{2} .... w_{r}$ where,
	\begin{itemize}
		\item $w_{j}\in S_{l}^{*}$ for some $l$, for all $j\in\{1,\ ...,\ r\}$;
		\item $w_{j}$ and $w_{j+1}$ are words over different alphabets for all $j\in\{1,\ ...,r-1\}$.
	\end{itemize}    
	If $\overline{w} = 1_{G}$, by our decomposition, $\overline{w}_j = 1_G$ for every $j$. This means every SAP from $G$ must be entirely contained in one of the finite groups $G_i$, as $\F_m$ has no SAPs with its free generating set. Therefore, $\Ou_{G,S}$ is finite because the number of SAPs in each finite group is finite. By Lemma \ref{lem:ouro}, $\Xs_{G,S}$ is an SFT.\\
	
	Now, let $G$ be a finitely generated group with $S$ such that $\Xs_{G,S}$ is an SFT, defined by the finite set of forbidden patterns $\Fo$. If $G$ is not a plain group, by Theorem \ref{thm:HS}, the Cayley graph $\Gamma(G,S)$ contains arbitrarily large simple cycles, and therefore arbitrarily large SAPs. Next, we can assume without loss of generality that every word in $\Fo$ has the same length, say $N\geq 1$. If $\Fo\subseteq \WP(G,S)$, take a SAP $W$ of length greater than $N+1$. Because SAPs contain no strict factors that belong to $\WP(G,S)$ and every cyclic permutation of the word defining a SAP is itself a SAP, the configuration $x=W^{\infty}$ does not contain any word from $\Fo$. Therefore $x\in \X_{\Fo}\setminus \Xs_{G,S}$, which is a contradiction.
	Suppose there are elements in $\Fo$ that are not in $\WP(G,S)$. As $\Fo$ contains forbidden patterns and $\X_{\Fo} = \Xs_{G,S}$, for every word $w\in\Fo$ there exists $N_{w}\in\N$ such that for every $u,v\in S^{N_w}$ the word $uwv$ contains a factor from $\WP(G,S)$. Let $M = \max_{w\in\Fo}N_{w}$ and take $W$ a SAP of length $2M+N+4$. Once again, because every SAP contains no strict factors that belong to $\WP(G,S)$ and every cyclic permutation of the word defining the SAP is itself a SAP, the configuration $x = W^{\infty}$ contains no factors from $\Fo$. Indeed, if there is a $w\in\Fo$ such that $w\factor W$ we can take the cyclic permutation of $W$ such that $w$ is at the middle. Thus, $w$ can be extended by words $u, v$ of length $M+1$ such that $uwv$ contains no factor in $\WP(G,S)$. As a consequence $x\in \X_{\Fo}\setminus \Xs_{G,S}$, which is a contradiction.
\end{proof}

As plain groups admit SFT skeletons, we have an effective procedure to calculate the connective constant of their Cayley graphs. As mentioned in Section~\ref{subsec:symbolic}, entropies of SFTs are non-negative rational multiples of logarithms of Perron numbers (see Theorem 4.4.4 \cite{lind2021introduction}). Thus, we can slightly improve Corollary 3.4 from \cite{gilch2017counting} in the case of (plain) groups.
\begin{corollary}
    Let $G$ be a plain group with $S$ a standard set of generators. Then, $\mu(G,S)$ is a non-negative rational power of a Perron number.
\end{corollary}

Let us sketch how to compute the connective constant using SFTs. Let $\Xs_{G,S}$ be the skeleton of the plain group $G = (\Conv_{i=1}^k G_i) \ast \F_m$ and $\Fo$ be the finite set of patterns defining it. Recall from Theorem~\ref{thm:SFT} that this set corresponds to the SAPs on each individual group $G_i$ as well as the words $ss^{-1}$ for all $s\in S$. Let $N$ be the length of the biggest word in $\Fo$. We can extend $\Fo$ to $\Fo'$ so that all words have length $N$. The Rauzy graph $R_N(G,S)$ of order $N$ of $\Xs_{G,S}$ is the finite directed graph whose vertices are labeled by the language of size $N$ of the skeleton $\mathcal{L}_N(\Xs_{G,S})$, and edges are labeled with $\mathcal{L}_{N+1}(\Xs_{G,S})$. There is an edge labeled by $w$ from $u$ to $v$ if $u$ is the prefix of length $N$ of $w$ and $v$ is the suffix of length $N$ of $w$. We denote the adjacency matrix of the graph $R_N$ by $M_N$, that is, if $\mathcal{L}_N(\Xs_{G,S})=\{ u_1,\dots,u_\ell\}$, the entry $M_N(i,j)$ represents the number of edges in $R_N$ from $u_i$ to $u_j$. Then the connective constant of $\Gamma(G,S)$ is the logarithm of the dominant eigenvalue of $M_N$, which exists by Perron-Frobenius' Theorem.

\begin{example}
Take $S_3$ the symmetric group on 3 elements with generating set $\tt{s}_1 = (1 \ 2)$ and $\tt{s}_2 = (1 \ 3)$, and the cyclic group $ \Z/3\Z = \langle \tt{t} \rangle$. Then, the skeleton of the plain group $G = S_3\ast  \Z/3\Z$ with respect to $S = \{\tt{s}_1, \tt{s}_2, \tt{t}^{\pm 1}\}$ is defined by the forbidden patterns,
$$\Fo = \{\tt{s}_1^2, \tt{s}_2^2, (\tt{s}_1\tt{s}_2)^3, \tt{t}^3, \tt{tt}^{-1}, \tt{t}^{-1}\tt{t}\}.$$
We obtain that the connective constant $\mu = \mu(G,\{\tt{s}_1,\tt{s}_2,\tt{t}^{\pm1}\})$ is the solution of the polynomial equation $x^7-4x^5-8x^4-8x^3-8x^2-8x-4=0$ obtained from the characteristic polynomial of the matrix described above, which is approximately $\mu\approx~2.8698315$.

%\todo{Figura of the Cayley graph: mejorar eso\dots}
\begin{figure}[H]
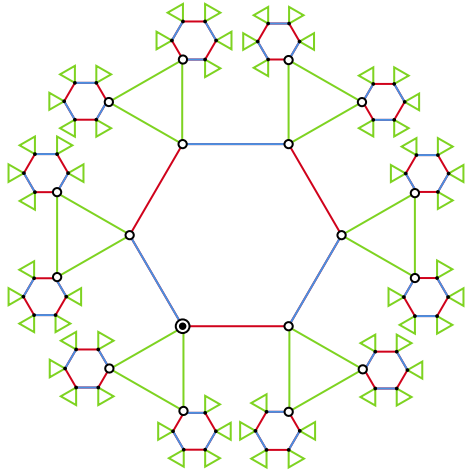

        \centering
        \include{figures/Cayley_graph_S3_Z3Z}
        \caption{A portion of the Cayley graph of the plain group $S_3\ast\Z/3\Z$. The two generators for $S_3$ are pictured in red and blue and alternate; the generator for $\Z/3\Z$ is pictured in green.}
        \label{fig:Cayley_graph_S3_Z3Z}
\end{figure}
\end{example}

The skeletons of plain groups with respect to their standard generating sets also have nice dynamical properties. We say a subshift $X\subseteq A^\Z$ is \define{irreducible} if for every $w_1, w_2\in\lang(X)$, there exists some $w\in\lang(X)$ such that $w_1ww_2\in\lang(X)$.

\begin{proposition}
    Let $G$ be a plain group with standard generating set $S$. Then, $\Xs_{G,S}$ is irreducible.
\end{proposition}
\begin{proof}
    Decompose $G$ as $\left(\Conv_{i=1}^k G_i\right) \ast \F_m$ with $S = S_1 \cup ... \cup S_{k+1}$ a standard generating set. Take $w_1,w_2\in\lang(\Xs_{G,S})$ appearing at position 0 of the configurations $x^{(1)},x^{(2)}\in \Xs_{G,S}$ respectively. There is a unique decomposition $w_i = w^i_{1} w^i_{2} .... w^i_{r_i}$ where,
    \begin{itemize}
        \item $w^i_{j}\in S_{l}^{+}$ for some $l$, for all $j\in\{1,\ ...,\ r_i\}$;
        \item $w^i_{j}$ and $w^i_{j+1}$ are words over different alphabets for all $j\in\{1,\ ...,r-1\}$.
    \end{itemize}
    If $w^1_{r_1}, w^2_1\in S^*_i$, take any generator $s\in S_j$ for $j\neq i$ and define $x = x^{(1)}_{(-\infty,0]}w_1sw_2x^{(2)}_{[|w_2|,+\infty)}$. Because we chose a generator that does not belong to $G_i$, and $x^{(1)}$ and $x^{(2)}$ belong to the skeleton, $x$ must also belong to the skeleton. This implies, $w_1sw_2\in\lang(\Xs_{G,S})$. If instead $w^1_r\in S^*_i$ and $w^2_1\in S^*_j$ for $i\neq j$, define
    $y = x^{(1)}_{(-\infty,0]}w_1ss'w_2x^{(2)}_{[|w_2|,+\infty)}$. As before, $y$ must belong to $\Xs_{G,S}$. This means, $w_1w_2\in\lang(\Xs_{G,S})$.
\end{proof}

\begin{corollary}
\label{cor:irreducible}
    For $G$ a plain group with standard generating set $S$, the set of periodic configurations of $\Xs_{G,S}$ is dense in $\Xs_{G,S}$. In other words, any bi-infinitely extendable SAW on $\Gamma(G,S)$ appears in a periodic bi-infinite SAW. Furthermore,
    $$\mu(G,S) = \lim_{n\to \infty} \sqrt[n]{e_n},$$
    where $e_n$ denotes the number of periodic points in $\Xs_{G,S}$ of period $n\in\N$.
\end{corollary}

This corollary states a general property of irreducible subshifts of finite type, namely, its set of periodic configurations is dense and its entropy is approximated through its periodic points~\cite{lind2021introduction}. We obtain a similar expression for the connective constants of Cayley graphs whose skeletons is not an SFT in Section~\ref{section.entropy_CC}.

%=-=-=-=-=-=-=-=-=-=-=-=-
\subsection{Effective Skeletons}

Let us briefly look at the case of effective skeletons. We know that recursively presented groups have recursively enumerable word problem. $\WP(G,S)$ is thus recursively enumerable for all finite generating sets. This enumeration gives us an enumeration of the forbidden patterns of our skeleton.

\begin{lemma}[\cite{aubrun2023domino}]
    Let $G$ be a recursively presented group. Then, for every generating set $S$, the subshift $\Xs_{G,S}$ is an effective subshift. In particular, $\mu(G,S)$ is a right computable real number.
\end{lemma}

In order to approach the converse, we give a computational upper bound of the word problem of the group in terms of the computability of finite SAWs on the Cayley graph.

\begin{lemma}
    The word problem for $G$ with respect to a generating set $S$ is decidable given an algorithm for $L_{SAW}(G,S)^c$.
\end{lemma}

\begin{proof}
	We describe a procedure to compute the word problem of $G$ given an algorithm that determines if a word belongs to $L = L_{SAW}(G,S)^c$. We begin with an algorithm that computes all words $w\in S^{\leq n}$ such that $w\in\WP(G,S)$ given $n$. This algorithm, which we call $\M$, is shown in Algorithm~\ref{alg:WP}.
	
	\begin{algorithm}[!ht]
		\caption{$\M$}
		\KwIn{$n \geq 2$}
		% \KwOut{$\WP(G,S)\cap S^n$}
		$T \gets \varnothing$\;
		\For{$w\in S^2$}{
			\If{$w\in L$}{
				$T\gets T\cup\{w\}$\;
			} 
		}
		\For{$i\in\{3, ..., n\}$}{
			\For{$w\in S^i$}{
				\If{$w\in L$}{
					\If{$w$ contains no factors from $T$}
					{
						$T\gets T\cup\{w\}$\;
					}
					\For{$v\in T$}{
						Delete $v$ from $w$ if present, to obtain $w'$\;
						\If{$w'\in T$}{
							$T\gets T\cup\{w\}$\;
		}}}}}
		\Return{T}\;
		\label{alg:WP}
	\end{algorithm}
	Let us show the output of $\M$ on $n$ is $\WP(G,S)\cap S^{\leq n}$. Let $T_i$ be the set $T$ in the algorithm after the first $i$ iterations of the \textbf{for} loop, for $i\in\{2, ..., n\}$. We claim $T_i = \WP(G,S)\cap S^{\leq i}$. First off, every non-self avoiding path of length two must represent the identity. Thus, $T_2 = \WP(G,S)\cap S^2$. Now, suppose we have the equality for $T_i$. Take $w\in\WP(G,S)\cap S^{i+1}$. This implies $w\in L$, and as seen in Lemma~\ref{lem:ouro}, it must represent a simple cycle, contain a shorter simple cycle, or a word of the form $ss^{-1}$. In the first case, $w$ contains no factors from $T_i$ and is therefore added to $T_{i+1}$. In the other two cases, it contains a factor from $T_i$ that after being deleted creates a word that belongs to $\WP(G,S)\cap S^{\leq i} = T_i$. Therefore, $w\in T_{i+1}$. Conversely, if $w'\in T_{i+1}\setminus T_i$ we know $w\in L$. If $w$ was added to $T_{i+1}$ because it contains no factors from $T_i$, it must represent a simple loop and is therefore in $\WP(G,S)$. On the other hand, if $w$ was added after deleting a factor from $T_i$, $w$ is made up of a word representing the identity with a factor representing the identity inserted into it. This means, $w\in\WP(G,S)$ and therefore $T_{i+1} = \WP(G,S)\cap S^{\leq i+1}$.
	
	Finally, to determine if a given word $w$ belongs to $\WP(G,S)$, we run $\M$ on the input $|w|$, and see if it is present in $T$.
\end{proof}

As a consequence, if $L_{SAW}(G,S)$ is co-recursively enumerable, the word problem of $G$ must be in $\Delta^0_2$ on the arithmetical hierarchy. This is the case when $\Xs_{G\ast H, S}$ is effective for $H$ any f.g. group, and $S = S_G\cup S_H$ with $S_G, S_H$ generating sets for $G$ and $H$ respectively. 

\begin{conjecture}
    A group is recursively presented if and only if there exists a finite generating set $S$ such that $\Xs_{G,S}$ is effective.
\end{conjecture}

Even though recursively presented groups define subshifts that are effective, if the structure of the underlying group is computationally complex, the configurations of the skeleton may be uncomputable. We say a configuration $x\in S^\Z$ is \define{computable} if there is an algorithm that on input $n\in\Z$ computes $x_n\in S$.

\begin{definition}
    A finitely generated group $G$ and generating set $S$ are said to be \define{algorithmically finite} if for every infinite recursively enumerable set $L\subseteq\F_{S}$ , there exist infinitely many pairs of distinct words $u,v\in L$ such that $\pi(u) = \pi(v)$, where $\pi:\F_S\to G$ is the canonical projection. We say $G$ is a \define{Dehn Monster} if it is infinite, recursively presented and algorithmically finite.
\end{definition}

This class of groups was introduced by Myasnikov and Osin in \cite{myasnikov2011algorithmically}, where they showed that Dehn Monsters exist. Furthermore, they showed that being algorithmically finite does not depend on the generating set.

\begin{proposition}
\label{prop:monster}
Let $G$ be a Dehn Monster. Then $\Xs_{G,S}$ is effective for any finite generating set $S$, but no configuration in $\Xs_{G,S}$ is computable.
\end{proposition}

\begin{proof}
    As the properties of being infinite, recursively presented and algorithmically finite are independent of the generating set, we take any generating set $S$ for $G$. If there existed a computable configuration $x\in \Xs_{G,S}$, we could recursively enumerate the set of words $L = \{x_{[0,n-1]}\in S^*\mid n\geq 1\}$. Then for any $u,v\in L$, $\pi(u)\neq\pi(v)$. If not, we would arrive at $x_{[0,n-1]} =_G x_{[0,m-1]}$ for some $n > m\geq 1$, which implies $x_{[n,m-1]} =_G \epsilon$. Therefore, any pair of elements in $L$ maps to a different element through $\pi$, which contradicts the algorithmic finiteness of $G$.
\end{proof}

%=-=-=-=-=-=-=-=-=-=-=-=-=-=-=-=-=-=-=-=-=-=-=-=-=-=-=-=-=-=-=-=-=-=-=-=-=-
\subsection{Periodic Configurations}

Configurations of particular importance in the study of subshifts are periodic configurations. Recall that a configuration $x\in\Xs_{G,S}$ is periodic if there exists $k\in\Z\setminus\{0\}$ such that $x_{i+k} = x_i$ for all $i\in\Z$. Such a configuration has a rigid structure, if we take $w = x_{[0,k-1]}$ the configuration $x$ is equal to the bi-infinite repetition of $w$, i.e., $x = w^{\infty}$. We will see that the existence of periodic points in the skeleton imposes strong restrictions on the structure of the underlying group. 

In~\cite{aubrun2023domino} it was shown that for any finitely generated group with a torsion-free element, the skeleton contains a periodic point. The periodic configuration was obtained by iterating any geodesic of the torsion-free element with the smallest length in the group. By re-interpreting the proof of~\cite[Theorem 7]{halin1973automorphisms} we obtain the following generalization.

\begin{proposition}
\label{prop:torsion-free}
    Let $G$ be a finitely generated group. Take a generating set $S$ and a torsion-free element $g\in G$, and
    $k = \argmin\{\|g^n\|_S \mid n\geq 1\}$. Then, for any geodesic $w\in S^*$ representing $g^k$, $w^{\infty}\in\Xs_{G,S}$ and is a bi-infinite geodesic. 
    % Furthermore, there exists $w'\in\lang(\Xs_{G,S})$ such that $g = \overline{w'}$.
\end{proposition}

\begin{proof}
    Fix a generating set $S$ and $g\in G$ torsion-free. Let $k\geq 1$ be as in the statement of the result, and denote $h = g^k$. Take a geodesic $w\in S^*$ for $h$ and let $\pi = (e_0, ..., e_{n-1})$ be the (self-avoiding) walk starting at the identity in the Cayley graph, of label $w$ and $|w| = n$. Let $\Pi$ be the bi-infinite walk made by concatenating the paths $h^m\cdot\pi$, for all $m\in\Z$. Thus, $\lambda(\Pi) = w^{\infty}$. We claim $\Pi$ is self-avoiding. Suppose it is not, and take the smallest $m\in\N$ such that $w^m$ does not represent a SAW. Let $\pi_i$ denote the walk $h^i\cdot\pi$ where $\init(\pi_i) = h^{i}$ and $\ter(\pi_i)=h^{i+1}$. As $m$ is minimal, we know the concatenated walks $\pi_0\ ...\ \pi_{m-1}$ and $\pi_{1}\ ...\ \pi_{m}$ are self-avoiding, and therefore the first intersection must occur between $\pi_0$ and $\pi_m$. Then, there exists $v,u\factor w$ prefixes, and $f\in\pi_0\cap\pi_{m}$ such that $f = \overline{v} = h^{m-1}\overline{u}$. Once again, because $m$ is minimal, $f\neq h, h^{m-1}$. If we compute the distance,
    $$d_S(f, h^{m-1}f) = d_S(\overline{u}, \overline{v})\leq |w|,$$
    as $k$ was chosen to minimize $\|g^{k}\|_{S}$, the distance between $f$ and $h^{m-1}f$ must be $|w|$. As both vertices are in $\pi_m$, this is only possible if $f = h^{m}$ and $h^{m-1}f = h^{m-1}$. Thus $h^{m} = 1_G$, which is a contradiction as $h$ is torsion-free. Therefore, $w^{\infty}\in\Xs_{G,S}$. 
    Finally, as we chose $k$ to minimize the distance to the identity of powers of $g$, $w^n$ must be a geodesic for all $n\in\N$. 
\end{proof}

\begin{theorem}[\cite{aubrun2023domino}]
\label{thm:aperiodic}
Let $G$ be a finitely generated group. Then, $G$ is a torsion group if and only if $\Xs_{G,S}$ is aperiodic for every (any) generating set.
\end{theorem}

\begin{proof}
    Suppose $G$ is a torsion group and let $x\in \Xs_{G,S}$ be a periodic configuration that infinitely repeats the word $w$. Let $g = \overline{w}$. By definition of the skeleton, $g^n = \overline{w^n} \neq 1_G$ for all $n\in\N$. This contradicts the fact that $G$ is a torsion group. Conversely, if $G$ has a torsion-element, by Proposition \ref{prop:torsion-free}, $\Xs_{G,S}$ contains a periodic point.
\end{proof}

\begin{corollary}
\label{cor:not_sofic_torsion}
    If $G$ is a finitely generated torsion group, then for all generating sets $\Xs_{G,S}$ is not sofic.
\end{corollary}

\begin{proof}
    If $G$ is a finitely generated torsion group, Theorem \ref{thm:aperiodic} tells us that none of its skeletons contain periodic configurations. Because non-empty sofic shifts always contain periodic configurations, no skeleton of $G$ can be sofic.
\end{proof}

%=-=-=-=-=-=-=-=-=-=-=-=-=-=-=-=-=-=-=-=-=-=-=-=-=-=-=-=-=-=-=-=-=-=-=-=-=-
\subsection{Minimality}
\label{subsec:minimality}

Our next objective is to find sufficient and necessary properties for the skeleton to be minimal. We begin by identifying possible subshifts of $\Xs_{G,S}$.

\begin{lemma}
\label{lem:sk_quotient}
    Let $G$ be a finitely generated group. Then,
    \begin{itemize}
        \item For a symmetric subset $S'\subseteq S$ and $H = \langle S'\rangle$, $\Xs_{H,S'}$ is a subshift of $\Xs_{G,S}$.
        \item For $N \trianglelefteq G$ a normal subgroup, $\Xs_{G/N, S'}$ is a subshift of $\Xs_{G,S}$, where $S' = S\setminus N$.
    \end{itemize}
\end{lemma}

\begin{proof}
    The first statement follows from the fact that any configuration from $\Xs_{H,S'}$ avoids all words from $\WP(G,S)$, as $H$ is a subgroup of $G$. For the second statement, let $x\in \Xs_{G/N, S}$ and $\{w_i\}_{i}\subseteq S^*$ a set of generators for $N$. Then by definition no factor $w\factor x$ belongs to $\WP(G/N, S) = \llangle \WP(G,S)\cup \{w_i\}\rrangle$. In particular, it does not belong to $\WP(G,S)$. Therefore $x\in \Xs_{G,S}$.
\end{proof}

Because every non-finite quotient gives us a non-empty subshift of $\Xs_{G,S}$, if we want to find a minimal skeleton, it is reasonable to look at the class of just infinite groups. A group $G$ is said to be \define{just infinite} if it is infinite and every proper quotient is finite.

\begin{proposition}
    \label{prop:minimal}
    Let $G$ be a finitely generated group with a generating set $S$. If $\Xs_{G,S}$ is minimal, then $G$ is a just infinite group.
\end{proposition}

\begin{proof}
    If $\Xs_{G,S}$ is minimal, every subshift of the form $\Xs_{G/N, S}$ must be either empty or equal to $\Xs_{G,S}$. Let $N$ be a proper normal subgroup, that is, non trivial and not equal to $G$. By Theorem \ref{thm:bounds_GL}, the connective constants satisfy $\mu(G/N,S) < \mu(G,S)$. Thus, the entropy of $\Xs_{G/N,S}$ is strictly less than that of $\Xs_{G,S}$, so they cannot be equal. Then $\Xs_{G/N, S} =\varnothing$, meaning $G/N$ is finite. Therefore, $G$ is just infinite.
\end{proof}

\begin{proposition}
    Let $G$ be a finitely generated group with a generating set $S$. If $\Xs_{G,S}$ is minimal, for every symmetric subset $S'\subsetneq S$, the subgroup $\langle S'\rangle$ is finite. In particular, torsion-free groups do not admit minimal skeletons.
\end{proposition}

\begin{proof}
If $\Xs_{G,S}$ is minimal, every subshift of the form $\Xs_{H, S'}$, for $H=\langle S'\rangle$, must be either empty or equal to $\Xs_{G,S}$. If $H = G$, then by Theorem~\ref{thm:bounds_GL}, $\mu(G,S') < \mu(G,S)$ meaning $\Xs_{G, S'}$ is empty, which is a contradiction. Therefore, $H\lneq G$. Now, take $s\in S\setminus H$ and $x\in\Xs_{H,S'}$. Define the configuration $x' = x_{(-\infty,-1]}sx_{[0,+\infty)}\in S^{\Z}$. Because $x$ is in $H$'s skeleton, we know neither $x_{(-\infty,-1]}$ nor $x_{[0,+\infty)}$ contain subwords from $\WP(G,S)$. Next, if there exist $i,j\in\N$ such that $x_{[i,-1]}sx_{[0,j]}\in\WP(G,S)$, then $s=_G~\!(x_{[i,-1]})^{-1}(x_{[0,j]})^{-1}$ which implies $s\in H$. This is a contradiction. Therefore, $x'\in\Xs_{G,S}\setminus\Xs_{H,S'}$. As $\Xs_{G,S}$ is minimal, $\Xs_{H,S'}=\varnothing$ and thus $H$ is finite. Finally, if a group is torsion-free, each generator generates $\Z$ which is not possible if the skeleton is minimal.
\end{proof}

\begin{remark}
    Both conditions are not sufficient to characterize minimal skeletons. Take the group $\tilde{A}_2$ with generating set $\{\tt{a}, \tt{b}, \tt{c}\}$ as defined in Section~\ref{subsec:SAWs}. This group is just infinite~\cite{moller2024quotients}, every pair of different generators generates a subgroup isomorphic to the finite group $S_3$, and every generator generates a copy of $ \Z/2\Z$. Nonetheless, its skeleton is not minimal. Take the periodic configuration $x = (\tt{abcb})^\infty$ which belongs to the skeleton. Then, the closure of the orbit of $x$ is finite and contains exactly periodic configurations defined by cyclic permutations of $\tt{abcb}$. But, the skeleton also contains the periodic configuration $y = (\tt{bcac})^\infty$, which is not one of the cyclic permutations.
\end{remark}

As the remark shows, if a minimal skeleton contains periodic configurations, it must be finite. This is the case of $\D_{\infty}$ with generating set $\{\tt{a},\tt{b}\}$, as seen on Example~\ref{ex:dihedral}, which defines a minimal skeleton.

%=-=-=-=-=-=-=-=-=-=-=-=-=-=-=-=-=-=-=-=-=-=-=-=-=-=-=-=-=-=-=-=-=-=-=-=-=-
%
%=-=-=-=-=-=-=-=-=-=-=-=-=-=-=-=-=-=-=-=-=-=-=-=-=-=-=-=-=-=-=-=-=-=-=-=-=-
\section{Sofic Skeletons}
\label{section.sofic_skeleton}

Let us tackle the question of which groups admit skeletons that are sofic. Since SFTs are sofic subshifts, from Theorem \ref{thm:SFT} we already know that plain groups admit sofic skeletons. But are there groups that admit sofic skeletons which are not SFTs? The first naive strategy would be to ask when the word problem of the group is regular, as this is the set of forbidden patterns used in the definition of the skeleton. Unfortunately, Anisimov showed in~\cite{anisimov1971regular} that $\WP(G,S)$ is regular if and only if $G$ is a finite group. We must therefore find other sets of forbidden patterns to study. Lemma \ref{lem:LSAW} tells us that we can look at the classes of groups where the language of SAWs is regular. The class of groups with such property have already been classified.

\begin{theorem}[\cite{lindorfer2020language}]
\label{thm:lindorfer_woess}
    Let $G$ be a f.g. group with $S$ a finite generating set. Then, $L_{\SAW}(G,S)$ is regular if and only if $\Gamma(G,S)$ has more than one end and all ends are thin of size 1.
\end{theorem}
As Lindorfer and Woess show, if $\Gamma(G,S)$ has only thin ends of size 1 its blocks are finite~\cite[Lemma 5.3]{lindorfer2020language}. Combining this fact with Haring-Smith's characterization of plain groups (Theorem \ref{thm:HS}), we see that groups where $L_{\SAW}(G,S)$ is regular are exactly plain groups. Nevertheless, when considering bi-infinitely extendable SAWs, the situation is different.
\begin{lemma}
\label{lem:ZyZ2}
    The group $G = \Z\times \Z/2\Z$ given by the presentation $\langle s,t\mid s^2, [t,s]\rangle$ has a sofic skeleton.
\end{lemma}

\begin{proof}
    We will exhibit a regular set of forbidden patterns for $\Xs_{G,S}$, with $S = \{s^{\pm 1},t^{\pm 1}\}$. Take the set of forbidden patterns
    $$\Fo = \{st^{\pm n}st^{\mp1}\mid n\in\N\}\cup \{t^{\pm1}st^{\mp n}s\mid n\in\N\}\cup\{s^2, t^{\pm1}t^{\mp1}\}.$$
    It is a simple exercise to show that $\Fo$ is a regular language. Let us show $\Xs_{G,S} = \X_{\Fo}$. Suppose there is a configuration $x\in \Xs_{G,S}\setminus\X_{\Fo}$. Because $x$ is in the skeleton, we know it does not contain factors of the form $s^2$ or $t^{\pm 1}t^{\mp1}$. Therefore it must contain a factor of the form $st^{\pm n}st^{\mp1}$ or $t^{\pm 1}st^{\pm n}s$. Suppose, $x$ contains the word $w=st^nst^{-1}$, for some $n\in\N$. There is no way to extend this word to the right, as $ws$ contains the factor $tst^{-1}s$ which evaluates to the identity, extending by $t^{-k}$ with $k\geq n$ creates the factor $st^nst^{-n}$ which evaluates to the identity, and extending by $t^{-k}s$ with $k\leq n-1$ creates the factor $t^{k}st^{-k}s$ which also evaluates to the identity. This leads to a contradiction. The other cases being analogous, we have $\Xs_{G,S}\subseteq \X_{\Fo}$.

    Now, suppose there is a configuration $x\in \X_{\Fo}\setminus \Xs_{G,S}$. By Lemma \ref{lem:ouro} and the definition of $\Fo$, $x$ must contain a SAP. Nevertheless, all SAPs in $G$ are cyclic permutations of words of the form $st^{n}st^{-n}$ for some $n\in\N$. Thus, each SAP contains a factor from $\Fo$, leading to a contradiction and proving $\X_{\Fo}\subseteq \Xs_{G,S}$.
\end{proof}

The Cayley graph of $\Z\times \Z/2\Z$ with respect to the before mentioned generating set is the bi-infinite ladder, which is a graph with two thin ends of size 2. An analogous proof can be done for the Cayley graph of $\Z$, which is a plain group, with respect to the generating set $\{\pm1, \pm2\}$, which also has thin ends of size 2.

To characterize groups which admit sofic skeletons we will make use of the fact that the language of a sofic subshift is regular. Our main tool in this regard will be the following version of the Pumping Lemma.

\begin{lemma}[Pumping Lemma]
    Let $L$ be a regular language. Then, there exists $p>0$ such that every word $w\in L$ with $|w|\geq p$ can be decomposed as $w = w'uv$ with $|u|>1$ and $|uv|\leq p$, such that for all $n\in\N$, $w'u^nv\in L$.
\end{lemma}

This allows us to show that being sofic is a property of skeletons that depends on the generating set.
\begin{proposition}
\label{prop:not_sofic}
        Every group $G$ admits a generating set $S$ such that $\Xs_{G,S}$ is not sofic. 
\end{proposition}

\begin{proof}
    By Corollary \ref{cor:not_sofic_torsion}, if $G$ is a torsion group, no skeleton is sofic. We can therefore suppose $G$ has a torsion-free element. Let $S'$ be any generating set for $G$, and $g$ a torsion-free element. We denote $s = g^2$, $t = g^3$, and define $S = S'\cup\{s,t\}$. Suppose $\Xs_{G,S}$ is sofic. Then, its language $\lang(\Xs_{G,S})$ is regular. Take $p > 0$ given by the pumping lemma. The word $w = ts^{p+1}t^{-1}s^{-p}$ is contained in $\lang(\Xs_{G,S})$ as it is globally admissible through the configuration $s^{\infty}ts^{p+1}t^{-1}s^{-p}t^{-1}(s^{-1})^{\infty}$ (see Figure \ref{fig:not_sofic}).

    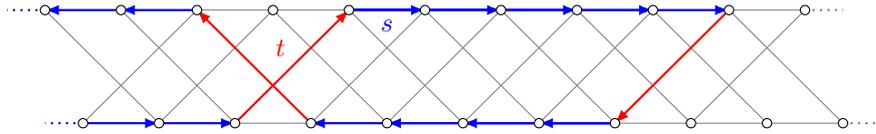
\begin{figure}[!ht]
        \centering
         
\begin{tikzpicture}
\foreach \x in {1,...,10}{
    \draw [gray](0+\x -1,0) -- (0+\x,0);
    \draw [gray](-0.5+\x -1,1.5) -- (-0.5+\x,1.5);
}
\foreach \x in {0,...,8}{
    \draw [gray](0+\x,0) -- (1.5+\x,1.5);
    \draw [gray](-0.5+\x,1.5) -- (1+\x,0);
}
\draw [gray](8.5,1.5) -- (10,0);

%Colores
    \draw [-latex,red,thick](2,0) -- (3.5,1.5);
    \draw [-latex,red,thick](3,0) -- (1.5,1.5);
\foreach \x in {1,...,5}{
    \draw [-latex,blue,thick](3.5,1.5) -- (3.5+\x,1.5);
    }
\foreach \x in {1,...,4}{
    \draw [-latex,blue,thick](7,0) -- (7-\x,0);
}    

\draw [-latex,blue,thick](0,0) -- (1,0);
\draw [-latex,blue,thick](1,0) -- (2,0);
\draw [-latex,blue,thick](0.5,1.5) -- (-0.5,1.5);
\draw [-latex,blue,thick](1.5,1.5) -- (0.5,1.5);
\draw[-latex,red,thick](8.5,1.5) -- (7,0);

\draw [blue,dotted,thick](-0.5,1.5) -- (-1,1.5);
\draw [blue,dotted,thick](0,0) -- (-0.5,0);

\draw [gray,dotted,thick](9.5,1.5) -- (10,1.5);
\draw [gray,dotted,thick](10,0) -- (10.5,0);

%Points
\foreach \x in {0,...,10}{
    \draw[fill=white]   (0 + \x,0) circle [radius=1.75pt];
    \draw[fill=white]   (-0.5 + \x, 1.5) circle [radius=1.75pt];
}

\draw (2.6,1) node [anchor=center]{$\textcolor[rgb]{1,0,0}{t}$};
\draw (4,1.3) node [anchor=center]{$\textcolor[rgb]{0,0,1}{s}$};
\end{tikzpicture}
        \caption{The configuration used for the Pumping Lemma (with $p=4$) depicted in the Cayley graph of the subgroup $\langle s,t\rangle$. The blue edges represent $s$ and the red edges $t$.}
        \label{fig:not_sofic}
\end{figure}
Now, by the Pumping Lemma we can decompose $w$ as $w = w'uv$ with $|uv|\leq p$. Thus, $u = s^{-k}$ with $k\geq 1$. Therefore, the word $w'u^2v=ts^{p+1}t^{-1}s^{-(p+k)}$ belongs to $\lang(\Xs_{G,S})$, which is a contradiction as $ts^{p+1}t^{-1}s^{-(p+1)} =_G \epsilon$. We conclude that $\Xs_{G,S}$ is not sofic.
\end{proof}

%=-=-=-=-=-=-=-=-=-=-=-=-=-=-=-=-=-=-=-=-=-=-=-=-=-=-=-
\subsection{Ends and Automorphisms}
\label{subsec:EndsAndAuts}

To go towards a characterization we must make a brief detour through the theory of ends and automorphisms of infinite quasi-transitive graphs.
Let us begin by taking a look at the theory of ends of connected graphs as introduced by Halin~\cite{halin1964unendliche}.

For a connected graph $\Gamma = (V_{\Gamma},E_{\Gamma})$, and a subset of vertices $A\subseteq V_{\Gamma}$ we denote by $\Gamma\setminus A$ the graph obtained by removing the vertices from $A$ and all their incident edges. We define a \define{ray} $\rho$ to be an infinite sequence of distinct vertices $\pi = (v_0, v_1, ...)\in V_{\Gamma}^{\N}$ such that there is an edge between $v_i$ and $v_{i+1}$. Analogously, a \define{double ray} to be a bi-infinite sequence of distinct vertices $\pi = (..., v_{-1},v_0,v_1, ...)\in V_{\Gamma}^{\Z}$ such that each successive vertex is connected by an edge. Two rays are said to be \define{equivalent} if for any finite set $A\subseteq V_{\Gamma}$ all but finitely many of their vertices are contained in the same connected component of $\Gamma\setminus A$. The equivalence classes of this relation are called the \define{ends} of the graph. Given an end $\omega$ and a finite set $A\subseteq V_{\Gamma}$, we define $C(\omega, A)$ to be the connected component of $\Gamma\setminus A$ where all the rays defining $\omega$ eventually end up in.

A \define{defining sequence} for an end $\omega$ is a sequence of finite subsets $(A_i)_{i\in\N}$ such that for all $i\geq 1$, $A_i\cup C(\omega,A_i)\subseteq C(\omega, A_{i-1})$. We say that an end $\omega$ is \define{thin} if there exist $m\geq 1$ and a defining sequence $(A_i)_{i\in\N}$ such that $|A_i|=m$ for all $i\in\N$. The smallest $m$ verifying this condition is called the \define{size} of $\omega$. An end is called \define{thick} if its size is infinite. Thomassen and Woess \cite{thomassen1993vertex} showed using Menger's Theorem that an end of size $m\in\N\cup\{\infty\}$, seen as an equivalence class of rays, contains a maximum of $m$ vertex disjoint rays.

Let $\Aut(\Gamma)$ denote the set of automorphisms of $\Gamma$, that is, bijections $f:V_{\Gamma}\to V_{\Gamma}$ that preserve edge adjacency. We say a subgroup $G\leq \Aut(\Gamma)$ acts \define{quasi-transitively} on $\Gamma$ if the set of orbits of the action $G\curvearrowright \Gamma$ is finite. We say $G$ acts \define{transitively} if there is a unique orbit. Freudenthal and Hopf independently showed~\cite{freudenthal1944enden,hopf1943enden} that a quasi-transitive graph has either $0$, $1$, $2$ or an infinite amount of ends.

In our setting, all Cayley graphs $\Gamma(G,S)$ are transitive under the action of the group $G$ by left translations. Furthermore, this action preserves the labeling given by the generating set.

Take $\Gamma$ to be locally finite and connected. Following~\cite{halin1973automorphisms}, automorphisms of $\Gamma$ can be classified into three classes. An automorphism $g\in\Aut(\Gamma)$ is,
\begin{itemize}
    \item \define{elliptic} if it fixes a finite subset of $V_{\Gamma}$,
    \item \define{parabolic} if it fixes a unique end, and
    \item \define{hyperbolic} if it fixes a unique pair of ends.
\end{itemize}

Halin showed~\cite{halin1973automorphisms} that for a non-elliptic automorphism $g\in\Aut(\Gamma)$ and vertex $v\in V$ the sequence $(v, g\cdot v, g^2\cdot v, ...)$ uniquely defines and fixes an end which we call the \define{direction} of $g$, and denote $D(g)$.

\begin{theorem}[Halin, \cite{halin1973automorphisms} Theorem 9]
\label{thm:halin}
    Let $g$ be a non-elliptic automorphism acting on a connected locally finite graph $\Gamma$. Then,
    \begin{itemize}
        \item $D(g)$ and $D(g^{-1})$ have the same size $m$.
        \item $D(g)\neq D(g^{-1})$ if and only if $m<\infty$. In this case $g$ is hyperbolic.
        \item There are $m$ disjoint double rays $\{\pi_i\}_{i=1}^m$ that are invariant by some positive power of $g$.
        \item If $g$ is hyperbolic, there exists a set $A\subseteq V_{\Gamma}$ of size $m$ and $k\in\N$ such that $(g^{kn}\cdot A)_{n\in\N}$ and $(g^{-kn}\cdot A)_{n\in\N}$ are defining sequences for $D(g)$ and $D(g^{-1})$ respectively, that intersect each $\pi_i$ in exactly one vertex.
    \end{itemize}
\end{theorem}

To precisely understand thin ends, we study the following graphs. 
\begin{definition}
    A connected locally finite graph is called a \define{strip} if it is two ended and quasi-transitive.
\end{definition}

We present general facts about strips that can be found in \cite{lindorfer2020language} and can be partly deduced from Theorem \ref{thm:halin}. For every strip $Q$, there exits a hyperbolic automorphism $g\in\Aut(Q)$ that fixes both ends $\omega^+$ and $\omega^-$. Both ends have the same size, for instance $m$, which entails the existence of a finite set $A$ of size $m$ such that $(g^n\cdot A)$ and $(g^{-n}\cdot A)$ are defining sequence for $\omega^+$ and $\omega^-$ respectively. In addition, there are $m$ disjoint double rays intersecting every $g^n\cdot A$ at exactly one vertex. We call such a strip a $g$-strip of size $m$. When working with a $g$-strip, up to taking a power of $g$, we can assume that the subgraph induced by $C(\omega^+, A)\setminus C(\omega^+, g\cdot A)$, which we call $P(\omega^+)$, is connected and finite.

The following results show that quasi-transitive graphs contain strips, under conditions on their ends and automorphisms.

\begin{lemma}[Lindorfer, Woess, \cite{lindorfer2020language} Lemma 3.3]
\label{lem:ends_to_strip}
    Let $\Gamma$ be a connected and locally finite graph where $G\leq\Aut(\Gamma)$ acts quasi-transitively. If $\Gamma$ has a thin end of size $m$, then it contains a $g$-strip of size $m$ for some $g\in G$.
\end{lemma}

\begin{lemma}[Lindorfer, Woess, \cite{lindorfer2020language} Lemma 3.4]
\label{lem:parabolic_to_strip}
    Let $\Gamma$ be a connected and locally finite graph where $G\leq\Aut(\Gamma)$ acts quasi-transitively. If $G$ contains a parabolic element, then for every $m\geq 1$, $\Gamma$ contains a $g$-strip of size at least $m$ for some $g\in G$.
\end{lemma}

%=-=-=-=-=-=-=-=-=-=-=-=-=-=-=-=-=-=-=-=-=-=-=-=-=-=-=-
\subsection{Characterizing Sofic Skeletons}

We provide the following characterization.
\begin{theorem}[Theorem~\ref{intro:sofic}]
\label{thm:sofic}
    Let $G$ be a finitely generated group. There exists $S$ such that $\Xs_{G,S}$ is sofic if and only if $G$ is a plain group, $\Z\times  \Z/2\Z$ or $\D_{\infty}\times \Z/2\Z$.
\end{theorem}

The idea of the proof is as follows. First, we use the same constructions of Lindorfer and Woess~\cite{lindorfer2020language} to find ladder-like structures on strips that will allow us to use the Pumping Lemma, and then conclude that all ends of the graph must be thin and of size at least 2. Next, by using similar ideas, we show that if the graph has an end of size two and the skeleton is sofic, then the group must be virtually $\Z$. Finally, we characterize virtually $\Z$ groups with sofic skeletons, completing the proof.

\begin{lemma}
\label{lem:2strip}
    Let $G$ be a finitely generated group with a generating set $S$, such that $\Gamma(G,S)$ contains an $g$-strip $Q$ for some $g\in G$. If $\Xs_{G,S}$ is sofic, then $Q$ has size at most $2$.
\end{lemma}

\begin{proof}
    Suppose $Q$ is of size greater or equal than $3$. Then, $Q$ contains three disjoint double rays which we call $\pi_1 = (v_i)_{i\in\Z}$, $\pi_2 = (u_i)_{i\in\Z}$ and $\pi_3 = (v'_i)_{i\in\Z}$, that are $g$-invariant.  Recall we took our subgraph $P(\omega^+)$ to be connected and finite. Therefore, there is a path $p_1$ that connects two of the rays. Suppose without loss of generality that $p_1$ connects $\pi_1$ and $\pi_2$ from $v_0$ to $u_0$ with no other vertices from $\pi_i$ for $i\in\{1,2,3\}$. Analogously, $g\cdot P(\omega^+)$ will connect $\pi_3$ with another of the rays through a path $p_2$. Up to rearranging indices, suppose $p_2$ connects $\pi_2$ to $\pi_3$ starting at $u_k$ and ending at $v'_k$, for some $k\in\N$ such that there are no other vertices from $\pi_i$ for $i\in\{1,2,3\}$. Because the vertex set of every element of the sequence $(g^n\cdot P(\omega^+))_{n\in\N}$ is pairwise disjoint, no walks in $\{g^{2n}\cdot p_1 \mid n\in\Z\}\cup\{g^{2n}\cdot p_2 \mid n\in\Z\}$ intersect. This way, the subgraph induced by the three paths $\{\pi_i\}_{i=1}^3$ and all $g^{2n}\cdot p_1$ and $g^{2n}\cdot p_2$, $Q'\subseteq Q$ is a periodic subdivision of the bi-infinite 3-ladder (see Figure~\ref{fig:3ladder}).

    \begin{figure}[!ht]
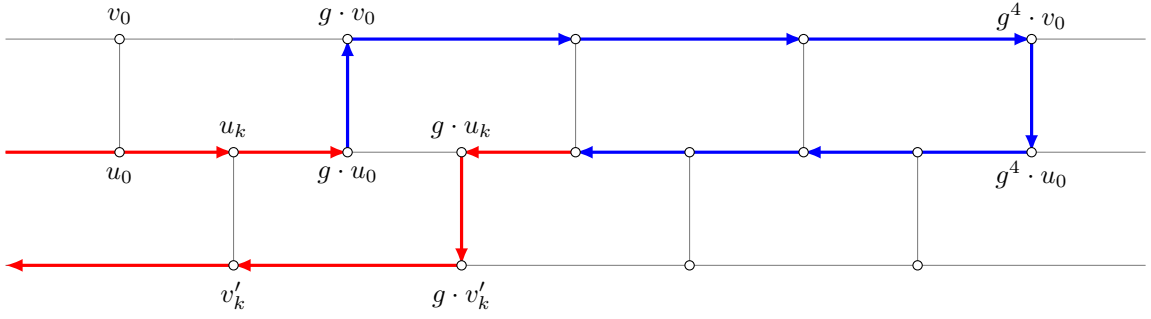

        \centering
         \includestandalone[scale=1]{figures/3ladder}
        \caption{The periodic subdivision of the $3$-ladder with the configuration $x$ highlighted in red and blue. The word $\lambda_3 \lambda_4^3 \lambda_3^{-1} \lambda_1^{-2}$ is marked in blue, whereas the infinite prefix and suffix of $x$ are marked in red.}
        \label{fig:3ladder}
    \end{figure}
    
    Now, let us give names to the labels of the different portions of the subdivision. Denote $\lambda_1$ the label from $u_k$ to $g\cdot u_k$, $\lambda_2$ the label from $u_k$ to $g\cdot u_0$, $\lambda_3$ the label from $u_0$ to $v_0$, $\lambda_4$ the label from $v_0$ to $g\cdot v_0$, $\lambda_5$ the label from $u_k$ to $v'_k$, and finally $\lambda_6$ the label from $g\cdot v'_k$ to $v'_k$. Then, for every $n\geq 1$ and $k<n$ the configuration
    $$x = \lambda_1^{\infty}\lambda_2.\lambda_3 \lambda_4^n \lambda_3^{-1} \lambda_1^{-k} \lambda_2^{-1}\lambda_5 \lambda_6^{\infty},$$
    belongs to the skeleton (See Figure~\ref{fig:3ladder}). Thus, $\lambda_3 \lambda_4^n \lambda_3^{-1} \lambda_1^{-k}\in\lang(\Xs_{G,S})$ for every $n\geq 1$ and $k<n$. Notice that the language $L = \{\lambda_3 \lambda_4^n \lambda_3^{-1} \lambda_1^{-k}\in S^* \mid k,n\in\N\}$ is regular. If $\Xs_{G,S}$ is sofic, its language $\lang(\Xs_{G,S})$ is regular. Then by the closure properties of regular languages, 
    $$L' = L\cap\lang(\Xs_{G,S})= \{\lambda_3 \lambda_4^n \lambda_3^{-1} \lambda_1^{-k}\in S^* \mid k<n\},$$
    is regular. By the Pumping Lemma, there exists a pumping length $p>0$. Take $\lambda_3 \lambda_4^{p+1} \lambda_3^{-1} \lambda_1^{-p}\in L'$. This word decomposes as $\tilde{w}ww'$ such that $|ww'|\leq p$. By the structure of our word, $ww'$ is a suffix of $\lambda_1^{-p}$. Next, $\tilde{w}w^2w'$ belongs to $L'$ and therefore has the form 
    $$\tilde{w}w^2w' = \lambda_3 \lambda_4^n \lambda_3^{-1} \lambda_1^{-k} = \lambda_3 \lambda_4^{p+1} \lambda_3^{-1}w_1w^2w',$$
    for some $k,l\in\N$ and $w_1\in S^*$. Because we are working over a Cayley graph, the labels of different edges starting from $u_0$ must be different and thus the first generators for $\lambda_4$ and $\lambda_3^{-1}$ are different. Therefore, $n = p+1$. This means, $\lambda_1^{-k} = w_1w^2w'$. Finally, as $\lambda_1^{-k}$ is strictly longer than $\lambda_1^{-p}$, $k \geq p+1$. But, this would imply $\lambda_3 \lambda_4^{p+1} \lambda_3^{-1} \lambda_1^{-k}$ belongs to $\lang(\Xs_{G,S})$ and is not self-avoiding, which is a contradiction.

\end{proof}

\begin{proposition}
        Let $G$ be a finitely generated group. If there exists $S$ such that $\Xs_{G,S}$ is sofic, then $G$ has more than one end, and $\Gamma(G,S)$ only has thin ends of size at most 2. 
\end{proposition}

\begin{proof}
    Let $G$ be a finitely generated group with generating set $S$ such that $\Xs_{G,S}$ is sofic. By Theorem \ref{thm:aperiodic}, $G$ is not a torsion group and therefore contains non-elliptic elements when seen as a subgroup of $\Aut(\Gamma(G,S))$. If $G$ is one-ended, then $\Gamma(G,S)$ has one end, which by Lemmas \ref{lem:parabolic_to_strip} and \ref{lem:2strip} is a contradiction. Thus, $\Gamma(G,S)$ has at least one thin end. By Lemma \ref{lem:ends_to_strip}, every thin end of size $m$ implies the existence of a strip of size $m$ in $\Gamma(G,S)$. By Lemma \ref{lem:2strip}, these strips -- and consequently their corresponding ends -- must have size at most $2$. Finally, if $\Gamma(G,S)$ had a thick end, from the proof of Theorem 4.1 in \cite{lindorfer2020language} we know it contains a one-ended subgraph. As before, this contradicts Lemmas~\ref{lem:parabolic_to_strip} and \ref{lem:2strip}.
\end{proof}

The converse of this proposition is not true: the group $\F_2\times \Z/2\Z$ along with the generating set $S = \{a^{\pm 1},b^{\pm1}, s\}$, given by the presentation $\langle a,b,s \mid s^2, [a,s], [b,s]\rangle$, has thin ends of size two, but its skeleton is not sofic. Similar to what we did in Proposition \ref{prop:not_sofic}, we can use the Pumping Lemma on the words $sa^{n+1}sa^{-n}$, with $n\in\N$, which are in $\lang(\Xs_{G,S})$ through the configuration $b^{\infty}sa^{n+1}sa^{-n}b^{\infty}$. The next Lemma captures this idea in the general setting.

\begin{lemma}
\label{lem:2ends_Z}
    Let $G$ be a finitely generated group. If there exists $S$ such that $\Xs_{G,S}$ is sofic and $\Gamma(G,S)$ has an end of size 2, then $G$ is virtually $\Z$. 
\end{lemma}

\begin{proof}
Suppose $\Gamma(G,S)$ has more than two ends, and take $\omega^+$ the end of size $2$. By Lemma \ref{lem:ends_to_strip}, there exists $g\in G$ and $Q$ a $g$-strip of size $2$. Then, there exist two $g$-invariant disjoint double rays $\pi_1 = (v_i)_{i\in\Z}$ and $\pi_2 = (u_i)_{i\in\Z}$. In the induced subgraph $P(\omega^+)$ we can find a path $p$ linking, without loss of generality, $v_0$ and $u_0$ with no other vertices from $\pi_1$ and $\pi_2$. Furthermore, the walks belonging to $\{g^n\cdot p\mid n\in\Z\}$ do not intersect each other. This way, the graph spanned by $\pi_1$, $\pi_2$ and $p$ is a periodic subdivision of the infinite $2$-ladder, $Q'\subseteq Q$. Now, take an end $\omega_1\neq\omega^{\pm}$ and $\pi_3 = (v'_i)_{i\in\N}$ a ray defining $\omega_1$. As $\pi_3$ defines an end different from $\omega^+$ there exists a smallest $N\in\N$ such that $v'_i\notin Q'$ for all $i>N$. Because $\Gamma(G,S)$ is transitive, we can take without loss of generality $v'_N$ to be equal to some $u_k$ with $k\in\N$, placed between $g\cdot u_0$ and $g^2\cdot u_0$ . This is all represented in Figure~\ref{fig:2ladder}.
    
    \begin{figure}[!ht]
        \centering
         
\begin{tikzpicture}
\foreach \x in {0,...,4}{
    \draw [gray](0+3*\x,1.5) -- (3+3*\x,1.5);
    \draw [gray](0+3*\x,3) -- (3+3*\x,3);
}
\foreach \x in {0,...,4}{
     \draw [gray](1.5+3*\x,1.5) -- (1.5+3*\x,3);
 }
\draw [red,very thick, -latex](0,1.5) -- (1.5,1.5);
\draw [red,very thick, -latex](1.5,1.5) -- (4.5,1.5);
\draw [blue,very thick, -latex](4.5,1.5) -- (4.5,3);
\draw [blue,very thick, -latex](4.5,3) -- (7.5,3);
\draw [blue,very thick, -latex](7.5,3) -- (10.5,3);
\draw [blue,very thick, -latex](10.5,3) -- (13.5,3);
\draw [blue,very thick, -latex](13.5,3) -- (13.5,1.5);
\draw [blue,very thick, -latex](13.5,1.5) -- (10.5,1.5);
\draw [blue,very thick, -latex](10.5,1.5) -- (7.5,1.5);
\draw [red,very thick, -latex](7.5,1.5) -- (6,1.5);

\draw[red, very thick, -latex](6,1.5) to[out=270,in=180] (9,0);

%Points
\foreach \x in {0,...,4}{
    \draw[fill=white]   (1.5 +3*\x,1.5) circle [radius=1.75pt];
    }
\foreach \x in {1,...,5}{
    % \draw[fill=white]   (6 +1.5*\x,1.5) circle [radius=1.75pt];
}
\foreach \x in {0,...,4}{
    \draw[fill=white]   (1.5 + 3*\x, 3) circle [radius=1.75pt];
}

\draw[fill=white]   (6, 1.5) circle [radius=1.75pt];

\draw (1.5,1.2) node [anchor=center]{$u_0$};
\draw (1.5,3.3) node [anchor=center]{$v_0$};
% \draw (3,1.8) node [anchor=center]{$u_k$};

\draw (4.5,3.3) node [anchor=center]{$g\cdot v_0$};
\draw (4.5,1.2) node [anchor=center]{$g\cdot u_0$};
\draw (6,1.8) node [anchor=center]{$u_k = v'_N$};

\draw (13.5,3.3) node [anchor=center]{$g^4\cdot v_0$};
\draw (13.5,1.2) node [anchor=center]{$g^4\cdot u_0$};

\draw (8,0.3) node [anchor=center]{$\pi_3$};

\end{tikzpicture}
        \vspace{-0.5cm}
        \caption{The periodic subdivision of the $2$-ladder with the configuration $x$ highlighted in red and blue. The word $\lambda_2 \lambda_3^3 \lambda_2^{-1} \lambda_1^{-2}$ is marked in blue, whereas the infinite prefix and suffix of $x$ are marked in red.}
        \label{fig:2ladder}
    \end{figure}
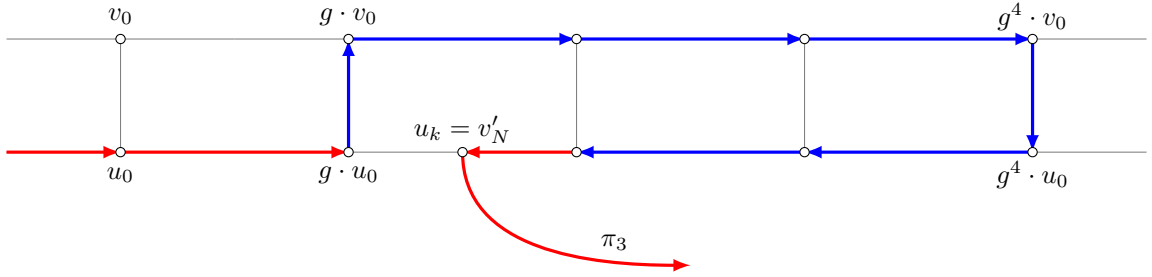

Let us label the different sections of the bi-infinite ladder. We denote by $\lambda_1$ the label of the path from $u_0$ to $g\cdot u_0$, $\lambda_2$ the label from $u_0$ to $v_0$, $\lambda_3$ the label from $v_0$ to $g\cdot v_0$, $\lambda_4$ the label from $g^2\cdot u_0$ to $u_k$, and $\lambda\in S^\N$ the label of the ray $(v'_{N+i})_{i\in\N}$. Then, for every $n\in\N$ and $k<n$ the configuration
$$x = \lambda_1^{\infty}.\lambda_2\lambda_3^n\lambda_2^{-1}\lambda_1^{-k}\lambda_4\lambda\in S^{\Z},$$
belongs to the skeleton. Then, $\lambda_2\lambda_3^n\lambda_2^{-1}\lambda_1^{-k}\in\lang(\Xs_{G,S})$ for all $k<n$. Notice that the language given by $L = \{\lambda_2\lambda_3^n\lambda_2^{-1}\lambda_1^{-k} \mid n,k\in\N\}$ is regular. Therefore, $L' = L\cap\lang(\Xs_{G,S})$ is regular as we assume $\Xs_{G,S}$ is sofic. Take $p>0$ the pumping length of $L'$ given by the Pumping Lemma. If we pump the word $\lambda_2\lambda_3^{p+1}\lambda_2^{-1}\lambda_1^{-p}$ is in $L'$ as we did in the proof of Lemma~\ref{lem:2strip}, we conclude that there must exist $n,k\in\N$ with $k\geq n$ such that $\lambda_2\lambda_3^n\lambda_2^{-1}\lambda_1^{-k}\in L'$, which is a contradiction as it is not self-avoiding.
\end{proof}

Virtually $\Z$ groups have a very rigid structure. Epstein and Wall~\cite{epstein1961ends,wall1967ends} (see~\cite{lima2013virtually} for our current formulation) showed that a group is virtually $\Z$ if and only if it is of one of the following forms:
\begin{enumerate}
    \item $\Z\ltimes_{\phi} F$, for some finite group $F$ and $\phi\in\Aut(F)$,
    \item $G_1\ast_F G_2$, for $G_1$, $G_2$ and $F$ finite groups such that $[G_1:F]=[G_2:F]=2$.
\end{enumerate}

Groups of the second type, $G_1\ast_F G_2$, can be shown to be isomorphic to $\D_{\infty}\ltimes_{\psi}F$ for some homomorphism $\psi:\D_{\infty}\to\Aut(F)$ (see~\cite[Section 1.3]{gilabert2022virt}). Furthermore, every element $g\in\Z\ltimes_{\phi}F$ can be uniquely expressed as $ft^n$ with $f\in F$, $n\in\Z$ and $t$ the free generator of $\Z$. Similarly, every element $g\in\D_{\infty}\ltimes_{\phi}F$ can be uniquely expressed as $f\tt{r}^n\tt{s}^{b}$ with $f\in F$, $n\in\Z$, $b\in\{0,1\}$, and $\tt{r}$ and $\tt{s}$ generators for $\D_{\infty} = \langle \tt{r},\tt{s} \mid \tt{s}^2, \tt{rsrs}\rangle$.

\begin{lemma}
\label{lem:ends_virtualZ}
    Let $G = H\ltimes_{\phi} F$ be a group such that $F$ is a finite group, and $H$ is either $\Z$ or $\D_{\infty}$. Then, for any generating set $S$ the ends of the Cayley graph $\Gamma(G,S)$ have size at least $|F|$.
\end{lemma}

\begin{proof}
   Take $G$ as in the hypothesis. We will tackle the case when $H = \Z$ and $H = \D_{\infty}$ separately.\\
   
   \textbf{Case 1: }$H = \Z$:

   Let $S$ be a generating set for $G$. Then, there must exist at least one generator that does not belong to $F$, which we call $s$. This generator, must have the form $s = gt^n$ for some $g\in F$ and $n\in\Z$, and is thus a torsion-free element of the group. For each element $f\in F$ we define the ray $\pi_f = (f, fs,\ ...,\ fs^i,\ ...)$. These rays are all pair-wise disjoint because $s$ is torsion-free. Therefore, the end $D(s)$ has size at least $|F|$.\\

   \textbf{Case 2: }$H = \D_{\infty}$:

   Let $S$ be a generating set for $G$. As before, there must exist at least one generator that does not belong to $F$, which we call $s$. If $s$ is of the form $g\tt{r}^n$, it is a torsion free element, and by the argument for the previous case, $D(s)$ has size at least $|F|$. Suppose then that all elements $S\setminus F$ are of the form $g\tt{r}^n\tt{s}$. Because $S$ is a generating set, $S\setminus F$ must contain at least two elements which we will name $s = g\tt{r}^n\tt{s}$ and $s' = g'\tt{r}^{m}\tt{s}$. Without loss of generality take $n>m$. Then, $ss'$ is the torsion-free element $g_1\tt{r}^{n-m}$ for some $g_1\in F$. As before, for each $f\in F$ define the ray $\pi_f = (f, fs, fss',\ ...,\ f(ss')^i,\ ...)$. Let us prove these rays are disjoint. If $f(ss')^k = f'(ss')^l$ for $f,f'\in F$ and $k,l\in\N$, then $(ss')^{k-l}\in F$ has torsion, which is a contradiction. On the other hand, if $f(ss')^ks = f'(ss')^l$ for $f,f'\in F$ and $k,l\in\N$, then $\tt{r}^{(n-m)(k+l) + n}\tt{s}\in F$, which is also a contradiction. Thus, the rays $\pi_f$ are disjoint and therefore $D(ss')$ has size at least $|F|$.
\end{proof}

\begin{proposition}
\label{prop:virtZ}
    Let $G$ be a virtually $\Z$ group. Then, there exists $S$ such that $\Xs_{G,S}$ is sofic if and only if $G$ is either $\Z$, $\Z\times  \Z/2\Z$, $\D_{\infty}\times \Z/2\Z$ or $\D_{\infty}$.
\end{proposition}
\begin{proof}
    Let $G$ be a virtually $\Z$ group. Then, $G$ is of the form $H\ltimes_{\phi}F$ for $H\in\{\Z,\D_{\infty}\}$ and $F$ a finite group. Joining Lemma~\ref{lem:ends_virtualZ} and Lemma~\ref{lem:2strip}, if $\Xs_{G,S}$ is sofic for some generating set $S$, $|F|\leq 2$. If $|F| = 1$, then $G$ is either $\Z$ or $\D_{\infty}$. If $|F| = 2$, then $F\simeq \Z/2\Z$ and $\phi$ is the trivial automorphism. In this case $G$ is either $\Z\times \Z/2\Z$ or $\D_{\infty}\times \Z/2\Z$.

    Conversely, we already know $\Z$ and $\D_{\infty}$ admit sofic skeletons as they are plain groups. Similarly, by Lemma~\ref{lem:ZyZ2}, $\Z\times \Z/2\Z$ admits a sofic skeleton. Finally, if we take the presentation for $\D_{\infty}\times \Z/2\Z$ given by $\langle a, b, s \mid a^2, b^2, s^2, (sa)^2, (sb)^2\rangle$  the corresponding Cayley graph is the bi-infinite ladder, and therefore Lemma~\ref{lem:ZyZ2} can be adapted to show its skeleton is sofic.
\end{proof}

We now have all the ingredients to characterize groups that admit a sofic skeleton.

\begin{proof}[Proof of Theorem~\ref{thm:sofic}]
    Let $G$ be a finitely generated group that admits a sofic skeleton through the generating set $S$. From Lemma \ref{fig:2ladder}, $\Gamma(G,S)$ has only thin ends, all of size at most 2. If all ends are of size 1, $G$ is a plain group. Next, if $G$ has at least one end of size 2, it is virtually $\Z$ by Lemma~\ref{lem:2ends_Z}. Then, by Proposition~\ref{prop:virtZ} $G$ is either $\Z\times  \Z/2\Z$ or $\D_{\infty}\times \Z/2\Z$.
    For the other direction, if $G$ is a plain group by Theorem~\ref{thm:SFT} it admits a sofic skeleton (as SFTs are sofic). Finally, if $G$ is either $\Z\times  \Z/2\Z$ or $\D_{\infty}\times \Z/2\Z$, Proposition~\ref{prop:virtZ} tells us $G$ admits a sofic skeleton.
\end{proof}

%=-=-=-=-=-=-=-=-=-=-=-=-=-=-=-=-=-=-=-=-=-=-=-=-=-=-=-=-=-=-=-=-=-=-=-=-=-
%
%=-=-=-=-=-=-=-=-=-=-=-=-=-=-=-=-=-=-=-=-=-=-=-=-=-=-=-=-=-=-=-=-=-=-=-=-=-
\section{Approximating Entropy and Connective Constants}
\label{section.entropy_CC}

\subsection{Bridges and Periodic Points}

We saw in Corollary~\ref{cor:irreducible} that the connective constant of SFT skeletons of plain groups can be approximated by their periodic points. This is also the case of irreducible sofic shifts~\cite[Theorem 4.3.6.]{lind2021introduction}. The natural question that follows is if this is possible for skeletons that are not sofic. Clisby showed~\cite{clisby2013endless} that this is the case for $\Z^d$ with standard generating set, which by Theorem~\ref{thm:sofic} do not define sofic skeletons. Instead of periodic configurations, Clisby used the term \emph{endless SAWs}. By using the notion of a bridge, introduced by Hammersley and Welsh~\cite{hammersley1962constant} and latter expanded upon by Grimmett and Li~\cite{grimmet2018locality}, we can generalize this result to any Cayley graph admitting a particular kind of graph height function.

\begin{definition}
\label{def:graphHeight}
    Let $\Gamma$ be an infinite, connected, locally finite, quasi-transitive graph. A \define{graph height function} $(h, H)$ is composed of a function $h:V_\Gamma\to \Z$ and a subgroup $H\leq\Aut(\Gamma)$ acting quasi-transitively on $\Gamma$ such that
    \begin{itemize}
    \item ($H$-difference-preserving) for all $u,v\in V_\Gamma$ and $g\in H$   
    $$h(g\cdot v) - h(g\cdot u) = h(v) - h(u),$$
    \item for all $u\in V_\Gamma$, there exists $v,v'\in V_\Gamma$ adjacent to $u$ such that $h(v) < h(u) < h(v')$.
    \end{itemize}
\end{definition}

A \define{bridge} with respect to the height function $(h,H)$ is a self-avoiding walk $\pi = (e_0,\ ...,\ e_{n-1})$ that verifies
$$h(\init(e_0)) < h(\ter(e_i)) \leq h(\ter(e_{n-1})),$$
for all $i\in\{0,...,n-1\}$.  

\begin{example}
    Take $G = \Z^2$ with the standard generating set $\{\tt{a}^{\pm},\tt{b}^{\pm}\}$. If we look at the generators as $\tt{a} = (1,0)$ and $\tt{b}=(0,1)$, we define the map $h(g) = m$ for $g=(m,n)\in\Z^2$. This function defines a graph height function with respect to $H = \Z^2$ acting by left-translations. Further still, any elementary amenable group admits a graph height function~\cite{grimmet2018locality}.
\end{example}

\begin{lemma}
\label{lem:bridge}
    Let $G$ be a finitely generated group with generating set $S$. If $\Gamma(G,S)$ admits a graph height function $(h, H)$, then $\Xs_{G,S}$ contains periodic configurations. Moreover, if $\pi$ is a bridge such that $\init(\pi)$ and $\ter(\pi)$ lie in the same $H$-orbit, then $\lambda(\pi)^{\infty}\in \Xs_{G,S}$.
\end{lemma}

\begin{proof}
    Let $\pi$ and $\pi'$ be two bridges such that $\ter(\pi) = \init(\pi')$. Then, the concatenation of both paths, $\pi\pi'$, is a bridge. Furthermore, for every $g\in H$, $g\cdot \pi$ is also a bridge, as $h$ is $H$-difference-preserving. 

    Now, let $R$ be a finite right transversal for the action of $H$ on $\Gamma(G,S)$. Take a bridge $\pi$ such that $\init(\pi),\ter(\pi)\in H\cdot r$ with $r\in R$. If $\init(\pi) = h_1\cdot r$ and $\ter(\pi) = h_2\cdot r$, because $h$ is $H$-difference-preserving, $h_2h_1^{-1}\cdot \pi$ is a bridge starting at $h_2\cdot r$. We can then concatenate $\pi$ with $h_2h_1^{-1}\cdot \pi$ to create a bridge, which we denote by $\pi^2$, whose label is given by $\lambda(\pi)^2$. This process can be iterated indefinitely to obtain a bi-infinite SAW whose label is given by $\lambda(\pi)^\infty$.

    Next, take a bridge $\pi$ such that $\init(\pi)\in H\cdot r_1$ and $\ter(\pi)\in H\cdot r_2$, with $r_1, r_2 \in R$ distinct representatives. Up to translation by an element from $H$, we can take any bridge starting at a vertex in $H\cdot r_2$, say $\pi_1$ and concatenate to $\pi$ to obtain a new bridge $\pi\pi_1$. Such a bridge exists by the definition of a graph height function as there must exist at least one vertex $v$ next to $r_2$ such that $h(r_2) < h(v)$. Similarly, we can take any bridge in the $H$-orbit of $\ter(\pi_1)$, which we denote $\pi_2$, and concatenate it --up to translation by $H$-- to $\pi\pi_1$. Iterating this process, for all $n\in \N$ we obtain a bridge $\pi\pi_1\ ...\ \pi_n$. Because there is a finite number of $H$-orbits, we will have $i\leq j$ such that $\init(\pi_i),\ter(\pi_j)$ belong to the same $H$-orbit. Then, as previously stated $\pi' = \pi_i\pi_{i+1}\ ...\ \pi_{j}$ is a bridge that can be iterated to obtain the periodic point $\lambda(\pi')^{\infty}$. 
\end{proof}

We saw in Theorem \ref{thm:aperiodic} that torsion groups have aperiodic skeletons. By the previous lemma, graph height functions imply the existence of periodic points. Combining these two facts we can state the following.

\begin{theorem}[Theorem~\ref{intro:height_function}]
\label{theorem.torsion_group_no_graph_height_function}
        The Cayley graphs of infinite torsion f.g. groups do not admit graph height functions.
\end{theorem}
This generalizes a result from Grimmett and Li who showed that the Grigorchuk group (which is an infinite torsion group) does not admit a graph height function, and more generally, Cayley graphs of torsion groups with certain conditions on the stabilizer of the identity~\cite{grimmett2015self}. However, the converse of the previous theorem does not hold, as they also showed that the Higman group, which is torsion-free~\cite{higman1951finitely}, does not admit graph height functions.\\

Bridges are particularly useful to compute the connective constant of graphs, and have been used to obtain exact expressions for the constant (for instance, \cite{duminil2012connective}). Let us denote by $b_{n,g}$ the number of bridges of length $n$ starting at $g\in G$, and $b_n = \min_{g\in G}b_{n,g}$. As stated in the proof of Lemma \ref{lem:bridge}, we can concatenate bridges with corresponding endpoints. Then, $b_n b_m\leq b_{n+m}$ and by Fekete's sub-additive Lemma, there exists a constant $\beta(\Gamma, h)$, where $\Gamma = \Gamma(G,S)$, such that
$$\beta(\Gamma,h) = \lim_{n\to\infty}\sqrt[n]{b_n}.$$

This process can be done for a larger class of graphs \cite{grimmet2018locality}, and helps us compute connective constants.

\begin{theorem}[General Bridge Theorem \cite{lindorfer2020bridge}]
\label{thm:bridge}
    Let $\Gamma$ be an infinite, connected, locally finite, quasi-transitive graph. Then, if $\Gamma$ admits a graph height function $(h,H)$, 
    $$\mu(\Gamma) = \max\{\beta(\Gamma, h), \beta(\Gamma, -h)\}.$$
\end{theorem}

Using this result, we can find conditions under which periodic points approximate the connective constant. In other words, periodic points from $\Xs_{G,S}$ approximate its entropy.

\begin{theorem}[Theorem~\ref{intro:approximation}]
    Let $G$ be a finitely generated group and $S$ a finite generating set. If $\Gamma(G,S)$ admits a graph height function $(h, H)$ such that $H$ acts transitively on $\Gamma(G,S)$, then 
    $$\mu(G,S) = \lim_{n\to \infty} \sqrt[n]{e_n},$$
    where $e_n$ denotes the number of periodic points in $\Xs_{G,S}$ of period $n\in\N$.
\end{theorem}

\begin{proof}
Let us denote by $\bar{b}_n$ the minimum over all $g\in G$ of the number of bridges of length $n$ starting at $g$ for the graph height function $(H,-h)$. Because $H$ acts transitively on $\Gamma(G,S)$, there is a single $H$-orbit. Thus, by Lemma \ref{lem:bridge}, every bridge for $h$ and $-h$ can be iterated to obtain a periodic point. This means,
    $$\max\{b_n, \bar{b}_n\}\leq e_n \leq c_n.$$
By taking the $n$th root and limit, Theorem \ref{thm:bridge} implies,
$$\mu(G,S) = \max\{\beta(\Gamma, h), \beta(\Gamma, -h)\}\leq \lim_{n\to\infty}\sqrt[n]{e_n}\leq \mu(G,S).$$
\end{proof}

Examples of Cayley graphs with a graph height function $(h, H)$ such that $H$ acts transitively are given by Cayley graphs that admit \emph{strong} graph height function where $H = G$. Strong graph height functions are graph functions where we also ask for $H$ to be a finite index subgroup of $G$, and to act by left translations~\cite{grimmett2015self}. A class of groups that admit such functions are groups with strictly positive first Betty number~\cite{grimmett2017connective}. Other sufficient conditions can be found in~\cite{grimmett2020cubic}.

%=-=-=-=-=-=-=-=-=-=-=-=-=-=-=-=-=-=-=-=-=-=-=-=-=-=-=-=-=-=-=-=-=-=-=-=-=-

\subsection{Lower Bounds with Self-Avoiding Polygons}
\label{subsection.lower_bounds_SAP}

What other methods can we use when graph height functions are not available? We will make use of a counting argument popularized by Rosenfeld~\cite{rosenfeld2020colorings} to find lower bounds on the connective constant by studying the sets of forbidden patterns defining the skeleton. Rosenfeld found the following criterion for subshifts.

\begin{theorem}[\cite{rosenfeld2022finding}, Corollary 12]
    Let $A$ be a finite alphabet and $\Fo\subseteq A^{+}$ a set of connected forbidden patterns. If there exists a positive real number $\beta>1$ such that
    $$|A| \geq \beta + \sum_{n\geq 0}f_n\beta^{1-n},$$
    then $\alpha(\X_{\Fo})\geq\beta$, where $f_n$ is the number of forbidden patterns of length $n$, that is, $f_n = |\Fo\cap A^{n}|$.
\end{theorem}

Therefore, we can use the different forbidden patterns we have found so far for the skeleton to find lower bounds for the connective constant. From Lemma \ref{lem:ouro}, we know the set of SAPs along with words of the form $ss^{-1}$ define a set of forbidden patterns for the skeleton. 

\begin{proposition}
\label{prop:rosenfeld}
    Let $(G,S)$ be an infinite finitely generated group. If there exists a positive real number $\beta$ such that 
    $$|S| - 1 \geq \beta + \sum_{n\geq 0}\rho_n\beta^{1-n},$$
    then $\mu(G,S) \geq \beta$, where $\rho_n$ the number of SAPs of length $n$, that is, $\rho_n = |\Ou_{G,S}\cap S^{n}|$.
\end{proposition}

The proof of the proposition is essentially the same as the one from \cite{rosenfeld2022finding}, but we add it for completion.

 \begin{proof} 
    Let $L_k$ be the set of SAWs of length $k\in\N$. We prove by induction that $|L_{k}| \geq \beta |L_{k-1}|$, for $\beta > 1$ as in the statement. Notice $|L_0| = 1$ as it only contains the empty word, and $|L_1| = |S|$. By hypothesis, $\beta\leq |S|$, and therefore $|L_1|\geq\beta|L_0|$. 

     Suppose our statement is true up to some $k>0$. In particular, for $j \leq k$ 
     $$|L_{k-j}| \leq \frac{|L_k|}{\beta^{j}}.$$

     Now, because every SAW from $L_k$ can be extended in $|S|-1$ ways, we have that
     $$|L_{k+1}| = (|S|-1)|L_k| - |B|,$$
     where $B$ is the set of SAWs that when extended generate a path of length $k+1$ that self-intersects. Notice that if $u\in B$, it can be written in the form $u = u'v$, where $v$ is a SAP, as $u$ is the extension by one non-backtracking letter of a SAW. We define the sets $B_v = \{u\in B\mid u = u'v\}$ to obtain the upper bound $|B| \leq \sum_{v\in\Ou_{G,S}}|B_v|$.
     Then, every word in $B_v$ is determined by a word from $L_{k+1 - |v|}$, namely $u'$. 
     Therefore,
     $$|B_v| \leq |L_{k+1 - |v|}|\leq \frac{|L_k|}{\beta^{|v| - 1}},$$
     and consequently,
     $$|B| \leq |L_k|\sum_{v\in\Ou_{G,S}}\beta^{1-|v|} = |L_k|\sum_{n\geq 1}\rho_n\beta^{1-n}$$

     Finally, joining all the formulas we obtain:
     $$|L_{k+1}| \geq \left((|S|-1) - \sum_{n\geq 1}\rho_n\beta^{1-n}\right)|L_k| \geq \beta|L_k|.$$
     Our induction proven, we can iterate the identity to obtain $|L_k|\geq \beta^k$, and thus $\mu(G, S)\geq \beta$.
\end{proof}

This approach is different from the usual use of self-avoiding polygons to approximate $\mu(G,S)$ in the literature. We define the asymptotic growth rate for SAPs through,
$$\mu_{SAP} = \limsup_{n\to\infty}\sqrt{\rho_n}.$$
It has been shown that $\mu_{SAP} = \mu(G,S)$ for Euclidean lattices~\cite{hammersley1961number,kesten1963number}, but $\mu_{SAP} < \mu(G,S)$ for many non-euclidean lattices, including some Cayley graphs of surface groups~\cite{panagiotis2019self}.

\section{Geodesic skeletons}
\label{seciton.geodesic}

A geodesic is always a self-avoiding walk. It is then natural to see what changes when we restrict a group's skeleton to bi-infinite geodesics. We define the geodesic skeleton of $G$ with respect to $S$ by,
\[\Xs^g_{G,S} = \{x\in \Xs_{G,S} \mid \forall w\factor x, w'=_G w : \ |w|\leq |w'|\}.\]

This subshift is contained in the skeleton $\Xs_{G,S}$, and the locally admissible language given by its defining forbidden patterns is $\Geo(G,S)$. In particular, $\Xs^g_{G,S}$ is generated by taking $\Geo(G,S)^c$ as the set of forbidden patterns. As was the case with the skeleton, $\Xs_{G,S}^g$ is empty if and only if the group is finite; this is due to Watkins who showed that every transitive infinite graph contains a bi-infinite geodesic~\cite{watkins1986infinite}.

%=-=-=-=-=-=-=-=-=-=-=-=-=-=-=-=-=-=-=-=-=-=-=-=-=-=-=-=-=-=-=-=-=-=-=-=-=-
\paragraph{SFT Geodesics}
We have a sufficient condition for the geodesic skeleton to be an SFT coming from a result by Gilman, Hermiller, Holt and Rees~\cite{gilman2007characterisation} that characterizes virtually free groups. They showed that for a finitely generated group $G$, there exists a finite generating set $S$ such that $\Geo(G,S)$ is $k$-locally excluding, that is, there exists a set $F$ of words of length $k$ such that a word $w\in S^*$ is geodesic if no factor of length $k$ belongs to $F$, if and only if $G$ is virtually free. An immediate consequence is the following.
\begin{proposition}
    Let $G$ be a virtually free group. Then, there exists $S$ such that $\Xs_{G,S}^g$ is a SFT.
\end{proposition}

%=-=-=-=-=-=-=-=-=-=-=-=-=-=-=-=-=-=-=-=-=-=-=-=-=-=-=-=-=-=-=-=-=-=-=-=-=-
\paragraph{Effective Geodesics} 
\begin{lemma}
    Let $G$ be a recursively presented group. Then, $\Xs^g_{G,S}$ is effective for every finite generating set $S$.
\end{lemma}

\begin{proof}
    We describe a co-semi-algorithm for $\Geo(G,S)$. By using an enumeration for the word problem, we can test every word $w'$ of length $|w'| < |w|$ to see if they define the same group element, i.e. $w'w^{-1} =_{G} 1_G$. If one such $w'w^{-1}$ appears in the enumeration, we know $w$ is not geodesic and accept. If $w$ is not geodesic, $w'w^{-1}$ will eventually be enumerated, for some $w'$ of shorter length. When $w\in\Geo(G,S)$ the algorithm never stops.
\end{proof}

In other words, the effectiveness of $\Xs^{g}_{G,S}$ is a consequence of the fact that a recursively enumerable word problem implies that the language of geodesics is co-recursively enumerable.\\

%=-=-=-=-=-=-=-=-=-=-=-=-=-=-=-=-=-=-=-=-=-=-=-=-=-=-=-=-=-=-=-=-=-=-=-=-=-
\paragraph{Sofic Geodesics}
By their definition, we can easily obtain many examples of sofic geodesic skeletons.
\begin{proposition}
\label{prop:geod_sofic}
    Let $G$ be a finitely generated group and $S$ a generating set. If $\Geo(G,S)$ is regular, then $\Xs_{G,S}^{g}$ is sofic.
\end{proposition}

Because the complement of a regular language is regular, when $\Geo(G,S)$ is regular then $\Xs^g_{G,S}$ is defined by a regular set of forbidden words, and is therefore sofic. We know that $\Geo(G,S)$ is regular for all generating sets in abelian groups \cite{neumann1995automatic} and hyperbolic groups \cite{epstein1992word}. Also, there exists at least one generating set such that $\Geo(G,S)$ is regular for virtually abelian groups \cite{neumann1995automatic}, Coxeter groups \cite{howlett1993miscellaneous} and other classes \cite{charney2004language,holt2012artin,antolin2016finite}.

In order to find a characterization of groups that admit a geodesic skeleton that is sofic, we must look at geodesics that are not extendable. These elements are precisely the ones known as dead-ends. An element $g\in G$ is a \define{dead-end}\index{dead-end} with respect to the generating set $S$ if for all $s\in S$ we have $d(1_G, gs) \leq d(1_G, g)$.

\begin{proposition}
\label{prop:deadends}
      Let $G$ be a finitely generated group along with a generating set $S$. Then, $\Xs_{G,S}^g$ is sofic and the language of geodesics defining dead-ends is regular if and only if $\Geo(G,S)$ is regular.
\end{proposition}

\begin{proof}
Denote the language of geodesics defining dead-ends by $D$. If $\Geo(G,S)$ is regular, then $D$ is regular: it suffices to take the minimal deterministic finite state automaton with a single sink state for $\Geo(G,S)$ and only keep accepting states where every outgoing transitions goes to the sink state. Furthermore, $\Xs_{G,S}^g$ is sofic by Proposition~\ref{prop:geod_sofic}.

Conversely, suppose $\Xs_{G,S}^g$ is sofic and $D$ is regular. To prove $\Geo(G,S)$ is regular in this case, we make use of the following notation: for a language $L\subseteq S^*$, consider the languages $L^{-1} = \{w \mid w^{-1}\in L\}$ and 
 $$mL = \{w\in S^{*} \mid \exists v,u\in S^*: \ vwu\in L\}.$$
Notice that if $L$ is regular, then both $L^{-1}$ and $mL$ are regular.
We claim that 
$$\Geo(G,S) = \lang(\Xs^g_{G,S})\cup mD \cup m(D^{-1}).$$
Indeed, in this case $\Geo(G,S)$ is regular as it is the union of regular languages. Let us prove the equality. First, it is clear that $\lang(\Xs^g_{G,S})\subseteq \Geo(G,S)$. Next, because inverses of geodesics are geodesic, and factors of geodesics are geodesics, we have $mD\cup m(D^{-1})\subseteq\Geo(G,S)$. This proves the left inclusion. For the converse, take $w\in\Geo(G,S)$. If $w$ is bi-infinitely extendable as a geodesic, then $w\in\lang(\Xs_{G,S}^g)$. If $w$ is not bi-extendable, there exist $v_1,v_2\in S^*$ such that $v_1wv_2\in\Geo(G,S)$, and $v_1wv_2s\notin\Geo(G,S)$ or for all $s\in S$ $sv_1wv_2\notin\Geo(G,S)$. This means $w$ is either a factor of dead-end (first case) or a factor of the inverse of a dead-end (second case). Therefore, $w\in mD\cup m(D^{-1})$. This proves our claim, and concludes the proof.
\end{proof}

\begin{corollary}
    Let $G$ be any finitely generated group with generating set $S$, and $\Z=\langle t\rangle$. Then, $\Xs^g_{G\ast\Z, S\cup\{t^{\pm 1}\}}$ is sofic if and only if $\Geo(G,S)$ is regular.
\end{corollary}
\begin{proof}
    The language of dead-ends of $G\ast\Z$ is empty as any geodesic can be extended by $t^{\pm 1}$. Furthermore, any geodesic in $G\ast\Z$ can be decomposed as geodesics on $G$ separated by factors of the form $t^{\pm n}$. Therefore, $\Geo(G\ast\Z, S\cup\{t^{\pm 1}\})$ is regular if and only if $\Geo(G,S)$ is regular. By Proposition~\ref{prop:deadends} this happens if and only if $\Xs^g_{G\ast\Z, S\cup\{t^{\pm 1}\}}$ is sofic.
\end{proof}

We pose the following question for sofic geodesic skeleton.
\begin{question}
       Is $\Xs^{g}_{G,S}$ sofic if and only if $\Geo(G,S)$ is regular?
\end{question}

%=-=-=-=-=-=-=-=-=-=-=-=-=-=-=-=-=-=-=-=-=-=-=-=-=-=-=-=-=-=-=-=-=-=-=-=-=-
\paragraph{Periodic Geodesics}

As was the case for the skeleton (Theorem~\ref{thm:aperiodic}), the aperiodicity of the geodesic skeleton also characterizes torsion groups.

\begin{theorem}
    Let $G$ be a finitely generated group. Then, $G$ is a torsion group if and only if $\Xs_{G,S}^g$ is aperiodic for every (any) generating set $S$.
\end{theorem}

\begin{proof}
    Suppose $G$ contains a torsion-free element $g$. Then, by Proposition~\ref{prop:torsion-free} for any generating set $S$, there exists $k\geq 1$ and $w\in S^*$ a geodesic for $g^k$ such that $w^{\infty}\in\Xs^g_{G,S}$, which is a periodic configuration. Conversely, if there exists a periodic configuration $x=w^\infty\in\Xs_{G,S}^g$ for some generating set $S$, then $g =\overline{w}$ is a torsion-free element.
\end{proof}

\subsection{Entropy and Connective Constant for Geodesics}

The objective of this section is to define an analog of the connective constant for geodesics. This relies on finding the asymptotic growth rate of geodesics of a given length. The \define{geodesic growth} of $G$ with respect to $S$ is the map $\Gamma_{G,S}:\N\to\N$ given by 
$$\Gamma_{G,S}(n) = |\{w\in \Geo(G,S)\mid |w|\leq n\}|.$$

Because this function is sub-multiplicative we can define the \define{geodesic connective constant} of the Cayley graph $\Gamma(G,S)$ as 
$$\mu^g(G,S) = \lim_{n\to\infty}\sqrt[n]{\Gamma_{G,S}(n)}.$$

As we saw in Remark~\ref{rem:strict_vs_normal}, $\mu^g(G,S)$ can be shown to be equal to the growth rate of the number of geodesics of length exactly $n$. Thus, the geodesic growth is an upper bound on the complexity of $\Xs^g_{G,S}$. Because $\Geo(G,S)$ is the set of locally admissible words for the geodesic skeleton, we use Lemma \ref{lem:global_local} to obtain an expression for the entropy.

\begin{lemma}
Let $G$ be a finitely generated group along with a generating set $S$. Then,
$$ h(\Xs^g_{G,S}) =\log(\mu^g(G,S)).$$
In other words, the geodesic connective constant is equal to the connective constant of bi-extendable geodesics.
\end{lemma}

Same as with the connective constant, the geodesic version is a non-negative rational power of a Perron number when $\Xs^g_{G,S}$ is sofic, and a right-computable number when $\Xs^g_{G,S}$ is effective. It is also a lower bound of the connective constant, that is, $\mu^g(G,S)\leq\mu(G,S)$. This inequality may be strict: graphs may have geodesic connective constant equal to 1 without being finite. As shown in \cite{bridson2012groups}, the virtually $\Z^2$ group $H = \langle \tt{a},\tt{t} \mid [\tt{a}, \tt{tat}^{-1}], \tt{t}^2\rangle$, has geodesic growth of order $O(n^3)$ and therefore, 
$$\mu^g(H,\{\tt{a},\tt{t}\}) = 1 < \sqrt{3}\leq \mu(H,\{\tt{a},\tt{t}\}).$$
This is also the case for lattices with known (or well-approximated) connective constants.

\begin{proposition}
The geodesic connective constants of the square grid, ladder graph and hexagonal grid are as follows:
\begin{itemize}
    \item $\mu^{g}(\Z^2) = 2$,
    \item  $\mu^{g}(\mathbb{L}) = 1$,
    \item $\mu^{g}(\mathbb{H})=\sqrt{2}$.
\end{itemize}
\end{proposition}

\begin{proof}
\begin{itemize}
    \item For the square lattice, we know that $\Gamma_{\Z^2, \{\tt{a},\tt{b}\}}(n) \leq 2^{n+3}$ which implies $h(\Xs^g_{\Z^2, \{\tt{a},\tt{b}\}}) = \log(2)$, as $\Xs^g_{G,S}$ contains the full-shift $\{\tt{a},\tt{b}\}^\Z$.
    \item Recall that the ladder graph $\mathbb{L}$ is the Cayley graph of $\Z\times \Z/2\Z$ with generating set $\{\tt{t},\tt{s}\}$, where $\tt{s}^2 =_{G} \epsilon$ and $\tt{t}$ is the generator for $\Z$. In this case, the geodesic growth is given by $\Gamma(n) = n^2 + 3n$ when $n\geq 2$. Thus, the geodesic connective constant is $1$.
    \item Also recall that the hexagonal grid $\mathbb{H}$ is the Cayley graph of the Coxeter group $\tilde{A}_2$ with generating set $\{\tt{a}, \tt{b}, \tt{c}\}$ (see Example~\ref{ex:Coxeter}). From~\cite{avasjo2004automata} we know that the generating function for the geodesic growth of $\tilde{A}_2$ in this case is given by
    $$f(z) = \frac{2z^3 + z^2 + z + 1}{(1-z)(1-2z^2)}.$$
    Thus, the geodesic connective constant is given by the reciprocal of the smallest zero of the denominator, which is $\sqrt{2}$.
\end{itemize}

\end{proof}

On the other hand, if we take the infinite dihedral group $\mathcal{D}_{\infty}$ with the generating set $S = \{\tt{a}, \tt{b}\}$ as seen in Example~\ref{ex:dihedral}, we have that $\mu(\D_{\infty},S) = \mu^g(\D_{\infty},S) = 1$.

\begin{question}
   Under which conditions $\mu(G,S) = \mu^g(G,S)$? Under which conditions is the inequality strict?
\end{question}

% %=-=-=-=-=-=-=-=-=-=-=-=-=-=-=-=-=-=-=-=-=-=-=-=-=-=-=-=-=-=-=-=-=-=-=-

\section*{Acknowledgments}

We would like to thank Matthieu Rosenfeld for discussions on the counting method, especially when applied to $n$-power-free words. We would also like to thank Alex Bishop for pointing us towards references on the geodesic growth of Coxeter groups.

%=-=-=-=-=-=-=-=-=-=-=-=-=-=-=-=-=-=-=-=-=-=-=-=-=-=-=-=-=-=-=-=-=-=-=-
%
% ---- Bibliography ----
%
% BibTeX users should specify bibliography style 'splncs04'.
% References will then be sorted and formatted in the correct style.
%
\printbibliography
\vspace{0.5cm}
\begin{tabular}{@{}l}\scshape Université Paris-Saclay, CNRS, LISN, Gif-sur-Yvette, France\\\textit{E-mail address: }\href{mailto:nathalie.aubrun@lisn.fr}{nathalie.aubrun@lisn.fr}\end{tabular}

\vspace{0.5cm}

\begin{tabular}{@{}l}\scshape Université Paris-Saclay, CNRS, LISN, Gif-sur-Yvette, France\\\textit{E-mail address: }\href{mailto:nicolas.bitar@lisn.fr}{nicolas.bitar@lisn.fr}\end{tabular}

%=-=-=-=-=-=-=-=-=-=-=-=-=-=-=-=-=-=-=-=-=-=-=-=-=-=-=-=-=-=-=-=-=-=-=-
\end{document}